\newcounter{algcounter}
\renewcommand{\thealgcounter}{\arabic{algcounter}}
\newenvironment{algorithm}[2]%
{\refstepcounter{algcounter}%
\label{#1}%
\begin{Sbox}\begin{minipage}{\boxwidth}%
\mbox{\quad}\vspace{-.025in}\\ \af{Algorithm~\thealgcounter : #2} 
\noindent}%
 {\mbox{\quad}\vspace{-.2in} \end{minipage}
\end{Sbox}\bigskip\noindent\fbox{\TheSbox}
}
\newcommand{\zed}{{\ensuremath{\mathbb{Z}}}}
\newcommand{\eff}{{\ensuremath{\mathbb{F}}}}
\newcommand{\Aut}{{\ensuremath{\mathsf{Aut}}}}
\newcommand{\af}[1]{\textbf{#1}} 
\newlength{\boxwidth}
\newtheorem{theorem}{Theorem}[section]
\newtheorem{lemma}[theorem]{Lemma}
\newtheorem{corollary}[theorem]{Corollary}
\newtheorem{example}{Example}[section]
\newtheorem{remark}{Remark}[section]
\newtheorem{definition}{Definition}[section]
\title{Uniqueness and explicit computation of mates in near-factorizations
}
\author[1]{Donald L.\ Kreher}
\affil[1]{Department of Mathematical Sciences,
Michigan Technological University, Houghton, MI 49931, USA, {\tt kreher@mtu.edu}}
\author[2]{William J.\ Martin\thanks{W.J.\ Martin’s research is supported by NSF DMS Award \#1808376.}}
\affil[2]{Department of Mathematical Sciences, Worcester Polytechnic Institute, Worcester MA, 01609, USA, {\tt martin@wpi.edu}}
\author[3]{Douglas R.\ Stinson\thanks{D.R.\ Stinson's research is supported by  NSERC discovery grant RGPIN-03882.}}
\affil[3]{David R.\ Cheriton School of Computer Science, University of Waterloo, Waterloo ON, N2L 3G1, Canada, {\tt dstinson@uwaterloo.ca}}
\begin{document}
\maketitle

\begin{abstract}
We 
show that a ``mate'' $B$ of a set $A$ in a near-factorization $(A,B)$ of a finite group $G$ is unique. Further, we  describe how to compute the mate $B$ very efficiently using an explicit formula for $B$. We use this approach to give an alternate proof of a theorem of Wu, Yang and Feng, 
which states that a strong circular external difference family cannot have more than two sets.
We prove some new structural properties of near-factorizations in certain classes of groups. Then we examine all the noncyclic abelian groups of order less than $200$ in a search for a possible nontrivial near-factorization.  All of these possibilities are ruled out, either by theoretical criteria or by exhaustive computer searches. (In contrast, near-factorizations in cyclic or dihedral groups are known to exist by previous results.) We also look briefly at  nontrivial near-factorizations of index $\lambda > 1$ in 
noncyclic abelian groups. Various examples are found with $\lambda = 2$ by computer.
\end{abstract}

\medskip

\noindent{\bf Keywords:} near-factorization, noncyclic abelian group

\medskip

\noindent{\bf Mathematics Subject Classification: 05B10}

\section{Mates in near-factorizations}
\label{sec1}

We begin with a definition.

\begin{definition}[near-factorization]
Let $(G,\cdot)$ be a finite multiplicative group with identity $e$. For $A, B \subseteq G$, define 
\[AB = \{ gh \colon g \in A, h \in B\}.\] We say that $(A,B)$ is a \emph{near-factorization} of $G$
if $|A| \times |B| = |G|-1$ and $G \setminus \{e\} =AB$. \textup{(}In the case where we have an additive group $(G,+)$ with identity $0$, the second condition becomes $G \setminus \{0\} =A+B$.\textup{)} Further, $(A,B)$ 
is an \emph{$(r,s)$-near-factorization} of $G$ if $|A|=r$ and $|B| =s$, where 
$rs = |G| -1$. 
\end{definition}

There is always a \emph{trivial} $(1,|G|-1)$-near-factorization of $G$ given by $A = \{e\}$, $B = G \setminus \{e\}$. 
A near-factorization with $|A| > 1$ and $|B| > 1$ is \emph{nontrivial}.
If $(A,B)$ is a near-factorization, then we say that $B$ is a \emph{mate} of $A$.

Near-factorizations and related objects have been studied in numerous papers, including \cite{BHS,DB,CGHK,Grin,KPS,Pech03,Pech,SS,Shin}.
Near-factorizations in cyclic or dihedral groups are known to exist; however, there is no known nontrivial near-factorization in any noncyclic abelian group. 

We discuss a few basic properties of near-factorizations now. If $G$ is an abelian group, it is clear that 
$AB = BA$ for any two subsets $A$ and $B$ of $G$. Hence $(A,B)$ is a near-factorization of $G$ if and only if 
$(B,A)$ is a near-factorization of $G$. It follows that there is an $(r,s)$-near-factorization of an abelian group $G$ if and only if there is an $(s,r)$-near-factorization of $G$. 

In this paper, we focus on abelian groups. Nevertheless, it is interesting to consider if the above result also holds for nonabelian groups. We can prove a similar result, based on the identity $(gh)^{-1} = h^{-1}g^{-1}$.
Because $AB = G \setminus \{e\}$ if and only if $(AB)^{-1} = G \setminus \{e\}$, we see that 
$AB = G \setminus \{e\}$ if and only if $B^{-1}A^{-1} = G \setminus \{e\}$. Hence, $(A,B)$ is a near-factorization of $G$ if and only if 
$(B^{-1},A^{-1})$ is a near-factorization of $G$. Because $(B^{-1},A^{-1})$ is an $(s,r)$-near-factorization of $G$, we have the following result.

\begin{lemma} 
Suppose $G$ is any finite group. Then there is an $(r,s)$-near-factorization of $G$ if and only if there is an $(s,r)$-near-factorization of $G$. 
\end{lemma}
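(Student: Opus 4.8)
The plan is to turn the informal argument in the paragraph preceding the statement into a clean two-line proof, the only care needed being the bookkeeping for the inversion map on subsets. For a subset $X \subseteq G$, write $X^{-1} = \{x^{-1} \colon x \in X\}$. First I would fix an $(r,s)$-near-factorization $(A,B)$ of $G$ and claim that $(B^{-1},A^{-1})$ is an $(s,r)$-near-factorization. The cardinality condition is immediate: since $x \mapsto x^{-1}$ is a bijection of $G$, we have $|B^{-1}| = |B| = s$ and $|A^{-1}| = |A| = r$, so $|B^{-1}|\,|A^{-1}| = sr = |G|-1$.

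Next I would verify the product condition $B^{-1}A^{-1} = G \setminus \{e\}$. Using the identity $(gh)^{-1} = h^{-1}g^{-1}$, one checks directly from the definition of the product of subsets that $(AB)^{-1} = B^{-1}A^{-1}$: an element lies in $(AB)^{-1}$ iff it equals $(gh)^{-1} = h^{-1}g^{-1}$ for some $g \in A$, $h \in B$, which is exactly membership in $B^{-1}A^{-1}$. Since inversion is a bijection of $G$ fixing $e$, it permutes $G \setminus \{e\}$; hence from $AB = G \setminus \{e\}$ we get $B^{-1}A^{-1} = (AB)^{-1} = (G \setminus \{e\})^{-1} = G \setminus \{e\}$. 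Therefore $(B^{-1},A^{-1})$ is an $(s,r)$-near-factorization of $G$, which proves one direction.

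For the converse, I would simply invoke symmetry: applying the direction just proved to an $(s,r)$-near-factorization of $G$ (with the roles of $r$ and $s$ interchanged) yields an $(r,s)$-near-factorization of $G$. This completes the argument.

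\textbf{Main obstacle.} There is no real obstacle here; the statement is essentially a restatement of the displayed reasoning in the excerpt. The only point requiring a moment's attention is the order reversal $(AB)^{-1} = B^{-1}A^{-1}$ (rather than $A^{-1}B^{-1}$), which matters for nonabelian $G$ and is precisely why the index pair flips from $(r,s)$ to $(s,r)$; in the abelian case the two sides coincide and the argument collapses to the one already recorded at the start of Section~\ref{sec1}.
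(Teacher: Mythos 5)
Your proposal is correct and follows essentially the same route as the paper, which also passes from $(A,B)$ to $(B^{-1},A^{-1})$ via the identity $(gh)^{-1}=h^{-1}g^{-1}$ and the fact that inversion permutes $G\setminus\{e\}$. You merely spell out the cardinality bookkeeping that the paper leaves implicit.
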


A subset $A \subseteq G$ is \emph{symmetric} if  $g \in A$ if and only if $g^{-1} \in A$.
A near-factorization $(A,B)$ of a group $G$ is symmetric if $A$ and $B$ are both symmetric.
The following important result from de Caen \emph{et al.} \cite{CGHK} will be used throughout this paper. 

\begin{lemma}
[D. de Caen {\it et al.}, {\cite[Proposition 2]{CGHK}}]
\label{symmetric.lem}
Suppose $G$ is an abelian group. If there is an $(r,s)$-near-factorization of $G$, then there is a
symmetric $(r,s)$-near-factorization of $G$.
\end{lemma}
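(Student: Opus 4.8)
The plan is to keep $(A,B)$ essentially as it is and pass to a translate $(A+t,\,B-t)$ for a suitably chosen $t\in G$: this is automatically again an $(r,s)$-near-factorization, since $(A+t)+(B-t)=A+B=G\setminus\{0\}$, so it is enough to find $t$ for which $A+t$ and $B-t$ are both symmetric. I will write $G$ additively with identity $0$, and will use repeatedly the remark that $(A,B)$ is a near-factorization exactly when the $rs=|G|-1$ sums $a+b$ (for $a\in A$, $b\in B$) are pairwise distinct and exhaust $G\setminus\{0\}$.

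The first point is that the \emph{differences} $a-b$ are pairwise distinct as well: a coincidence $a_1-b_1=a_2-b_2$ with $(a_1,b_1)\neq(a_2,b_2)$ would give $a_1+b_2=a_2+b_1$ with $(a_1,b_2)\neq(a_2,b_1)$, contradicting distinctness of the sums. Hence $A-B$ has exactly $rs=|G|-1$ elements, so $A-B=G\setminus\{h\}$ for a unique $h\in G$. I would then compare two sums over $A\times B$. Since the sums $a+b$ realise every nonzero element exactly once, $\sum_{a\in A,\,b\in B}(a+b)=\sum_{g\in G}g$; since the differences $a-b$ realise every element except $h$ exactly once, $\sum_{a\in A,\,b\in B}(a-b)=\bigl(\sum_{g\in G}g\bigr)-h$. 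The left-hand sides equal $s\,\sigma_A+r\,\sigma_B$ and $s\,\sigma_A-r\,\sigma_B$, where $\sigma_A=\sum_{a\in A}a$ and $\sigma_B=\sum_{b\in B}b$; subtracting gives $h=2r\,\sigma_B$, so $h\in 2G$. Set $t:=-r\,\sigma_B$, so that $2t=-h$.

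Next I would work in the rational group algebra $\mathbb{Q}[G]$, writing $\widehat S=\sum_{g\in S}g$ for $S\subseteq G$ and $1$ for its identity; the near-factorization condition becomes $\widehat A\,\widehat B=\widehat G-1$. For the trivial character $\chi(\widehat A)=r\neq0$, and for any nontrivial character $\chi(\widehat A)\,\chi(\widehat B)=\chi(\widehat G-1)=-1\neq0$, so $\widehat A$ and $\widehat B$ are units in $\mathbb{Q}[G]$. Now $\widehat{-A}\cdot\widehat B$ and $\widehat{A-h}\cdot\widehat B$ both equal $\widehat{G\setminus\{-h\}}$: the first because $(-A)+B=-(A-B)=G\setminus\{-h\}$ with each element occurring once, the second because $(A-h)+B=(A+B)-h=G\setminus\{-h\}$ with each element occurring once. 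Cancelling the unit $\widehat B$ gives $\widehat{-A}=\widehat{A-h}$, that is, $-A=A-h$. Now put $A'=A+t$ and $B'=B-t$. Then $-A'=-A-t=(A-h)-t=A+2t-t=A+t=A'$, so $A'$ is symmetric; and since $A'$ is symmetric, $(A',-B')=(-A',-B')$ is an $(r,s)$-near-factorization too, so from $\widehat{A'}\,\widehat{B'}=\widehat{A'}\,\widehat{-B'}=\widehat G-1$ and the fact that $\widehat{A'}$ is a unit (for the same reason as $\widehat A$) we obtain $\widehat{B'}=\widehat{-B'}$, i.e.\ $B'$ is symmetric too. Thus $(A',B')$ is the required symmetric $(r,s)$-near-factorization.

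The step I expect to carry the real weight is the observation that a near-factorization forces the \emph{differences} $a-b$, not merely the sums $a+b$, to be all distinct, together with the summation identity showing that the single ``missing'' difference $h$ necessarily lies in $2G$. If $h$ were not in $2G$ one could not solve $2t=-h$, and in a general abelian group one cannot simply halve an element, so the whole translation trick would break down. Everything else — that $\widehat A$ and $\widehat B$ are units in $\mathbb{Q}[G]$, that a translate of a near-factorization is a near-factorization, and the two one-line cancellations — is routine, and I would expect the finished argument to be short.
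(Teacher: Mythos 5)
Your argument is correct, and in fact it supplies a proof where the paper gives none: Lemma~\ref{symmetric.lem} is stated with a citation to de Caen \emph{et al.}\ and is not proved in this paper, so there is no in-text proof to compare against. Your route is in the spirit of the original group-ring argument but is organized around two clean observations. First, the injectivity of $(a,b)\mapsto a+b$ forces injectivity of $(a,b)\mapsto a-b$, so $A-B=G\setminus\{h\}$ for a unique $h$; the summation identity $s\sigma_A+r\sigma_B=\sum_{g\in G}g$ versus $s\sigma_A-r\sigma_B=\sum_{g\in G}g-h$ then gives $h=2r\sigma_B\in 2G$, which is exactly what makes the translation $t=-r\sigma_B$ available in an arbitrary abelian group (where one cannot halve elements in general). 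Second, the cancellation of $\widehat{B}$ in $\mathbb{Q}[G]$ to get $-A=A-h$, and of $\widehat{A'}$ to get $B'=-B'$, is just the uniqueness-of-mates principle that the paper proves independently as Theorem~\ref{T3} via the matrix identity (\ref{eq2}); you could equally well quote that theorem (applied to the translated pairs) instead of re-deriving non-zero-divisibility from characters. All the individual steps check out: the distinctness of differences, the identity $(-A)+B=-(A-B)$, the fact that $0$--$1$ elements of the group algebra determine their supports, and the verification $-A'=(A-h)-t=A+2t-t=A'$. The only place a referee might ask for a word more is the claim that $\widehat{A}$ and $\widehat{B}$ are units in $\mathbb{Q}[G]$ rather than merely in $\mathbb{C}[G]$; for the cancellations you only need them to be non-zero-divisors, which already follows from all character values being nonzero, so this is cosmetic.
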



Suppose $|G| = n$ and the elements of $G$ are enumerated as
$g_1, \dots , g_n$. For convenience, we will assume that $g_1 = e$ (the identity).
For any subset $H \subseteq G$, define an $n \times n$ matrix $M(H)$ as follows:
\begin{equation}
\label{M.eq}
M(H)_{i,j} = \begin{cases}
1 & \text{if $(g_i)^{-1}g_j \in H$}\\
0 & \text{otherwise}.
\end{cases}
\end{equation}

Let $J$ denote the all-ones matrix and let $I$ be the identity matrix.
The following important theorem holds for all finite groups (abelian and nonabelian).

\begin{theorem}[D. de Caen {\it et al.}, {\cite{CGHK}}]
\label{equiv.thm}
Suppose $A,B \subseteq G$. Then $(A,B)$ is a near-factorization of $G$ if and only if
$M(A)M(B) = J-I$ \textup{(}this is an equation over the integers\textup{)}. 
\end{theorem}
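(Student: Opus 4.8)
The plan is to interpret the $(i,j)$ entry of the product $M(A)M(B)$ combinatorially and then finish with a short counting (pigeonhole) argument. First I would expand
\[
\bigl(M(A)M(B)\bigr)_{i,j} = \sum_{k=1}^{n} M(A)_{i,k}\, M(B)_{k,j} = \#\bigl\{ k : g_i^{-1}g_k \in A \text{ and } g_k^{-1}g_j \in B \bigr\}.
\]
The key observation is that setting $a = g_i^{-1}g_k$ and $b = g_k^{-1}g_j$ yields $ab = g_i^{-1}g_j$, and conversely every factorization $g_i^{-1}g_j = ab$ with $a \in A$, $b \in B$ comes from the \emph{unique} index $k$ with $g_k = g_i a$ (for which both membership conditions indeed hold). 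Hence $\bigl(M(A)M(B)\bigr)_{i,j} = r(g_i^{-1}g_j)$, where $r(g)$ denotes the number of ordered pairs $(a,b) \in A \times B$ with $ab = g$. Since $g_1 = e$, note that $g_i^{-1}g_j = e$ exactly when $i = j$, so the diagonal entries of $M(A)M(B)$ count representations of the identity.

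Second, I would record that each row of $M(A)$ contains exactly $|A|$ ones (for fixed $i$, the condition $g_i^{-1}g_j \in A$, i.e.\ $g_j \in g_i A$, holds for exactly $|A|$ values of $j$) and each row of $M(B)$ contains exactly $|B|$ ones; equivalently $M(A)\mathbf{1} = |A|\,\mathbf{1}$ and $M(B)\mathbf{1} = |B|\,\mathbf{1}$, where $\mathbf{1}$ is the all-ones vector. Consequently $M(A)M(B)\mathbf{1} = |A|\,|B|\,\mathbf{1}$, whereas $(J-I)\mathbf{1} = (n-1)\mathbf{1}$ with $n = |G|$; so $M(A)M(B) = J-I$ already forces $|A|\,|B| = |G|-1$. (One can also see this by summing all entries: by the first step the entry sum of $M(A)M(B)$ equals $\sum_{g \in G} r(g) = |A|\,|B|$, while that of $J-I$ is $n^2-n$.)

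Now for the two implications. If $M(A)M(B) = J-I$, then by the first step $r(e) = 0$ and $r(g) = 1$ for every $g \neq e$; in particular $AB = G \setminus \{e\}$, and together with $|A|\,|B| = |G|-1$ this is exactly the definition of a near-factorization. Conversely, if $(A,B)$ is a near-factorization, then $AB = G \setminus \{e\}$ gives $r(g) \geq 1$ for all $g \neq e$ and $r(e) = 0$, while $\sum_{g \neq e} r(g) = |A|\,|B| = |G|-1$, which is precisely the number of non-identity elements; hence each such $r(g)$ equals $1$. By the first step this says $\bigl(M(A)M(B)\bigr)_{i,j}$ equals $1$ when $i \neq j$ and $0$ when $i = j$, i.e.\ $M(A)M(B) = J-I$.

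The computation is routine; the one point requiring care is the bijection in the first step between the summation index $k$ and the factorizations $g_i^{-1}g_j = ab$ — specifically verifying that $(a,b)$ determines $k$ uniquely and that $g_k = g_i a$ makes both conditions $g_i^{-1}g_k \in A$ and $g_k^{-1}g_j \in B$ hold — together with keeping the convention $g_1 = e$ in view so that the diagonal of $J-I$ matches up with representations of the identity. I do not expect any genuine obstacle beyond this bookkeeping.
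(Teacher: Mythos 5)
Your proof is correct. The paper itself states this theorem with a citation to de Caen \emph{et al.}\ \cite{CGHK} and gives no proof, so there is nothing internal to compare against; your argument --- identifying $\bigl(M(A)M(B)\bigr)_{i,j}$ with the number of representations of $g_i^{-1}g_j$ as a product $ab$ with $a\in A$, $b\in B$, and then using the row-sum/counting argument to pass between the set equation $AB=G\setminus\{e\}$ with $|A|\,|B|=|G|-1$ and the exact-one-representation condition --- is the standard proof of this equivalence and is complete as written.
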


\begin{theorem}
\label{T2}
Let $n$ be a positive integer and let $J - I = XY$ be a factorization into $n \times n$ integral matrices $X$ and $Y$. Suppose also that $XJ = rJ$. Then
\begin{equation}
\label{eq2}
 Y = \frac{1}{r} J - X^{-1}.
 \end{equation}
\end{theorem}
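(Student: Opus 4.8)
The plan is to read off the formula by inverting $X$ and combining the two hypotheses $XY = J-I$ and $XJ = rJ$ directly. The one nontrivial ingredient is the observation that $J-I$ is nonsingular: the eigenvalues of $J$ are $n$ (with eigenvector the all-ones column vector $\mathbf{1}$) and $0$ (with multiplicity $n-1$), so the eigenvalues of $J-I$ are $n-1$ and $-1$, whence $\det(J-I) = (-1)^{n-1}(n-1) \neq 0$ when $n \geq 2$. (For $n=1$ all the matrices are scalars and the statement is immediate.) Taking determinants in $XY = J-I$ then forces $\det X \neq 0$, so $X$ is invertible over $\mathbb{Q}$ (and so is $Y$).

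Next I would check that $r \neq 0$: if $r=0$ then $XJ = 0$, and left-multiplying by $X^{-1}$ gives $J = 0$, which is false for $n \geq 1$. Hence we may divide by $r$. Now multiply $XJ = rJ$ on the left by $\tfrac{1}{r}X^{-1}$ to get $\tfrac{1}{r}J = X^{-1}J$. Finally, left-multiply $XY = J-I$ by $X^{-1}$ to obtain
\[
Y = X^{-1}(J-I) = X^{-1}J - X^{-1} = \tfrac{1}{r}J - X^{-1},
\]
which is exactly \eqref{eq2}.

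I do not expect any real obstacle: the whole argument is a short chain of matrix manipulations once the nonsingularity of $J-I$ is available, and that is a one-line spectral computation. The only point needing a little care is to justify that $X$ (equivalently $Y$) really is invertible, so that writing $X^{-1}$ is legitimate and dividing by $r$ is warranted, rather than taking this for granted; the determinant argument above supplies this. It is also worth noting explicitly that, unlike the integer identity $M(A)M(B) = J-I$ of Theorem~\ref{equiv.thm}, equation \eqref{eq2} is an identity over $\mathbb{Q}$, since neither $\tfrac{1}{r}J$ nor $X^{-1}$ need have integer entries.
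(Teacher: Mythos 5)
Your proof is correct and follows essentially the same route as the paper's: deduce invertibility of $X$ from $\det(J-I)\neq 0$, then left-multiply $XY=J-I$ by $X^{-1}$ and use $XJ=rJ$ to replace $X^{-1}J$ with $\tfrac{1}{r}J$. The only differences are that you spell out the spectral computation showing $\det(J-I)\neq 0$ and the check that $r\neq 0$, both of which the paper leaves implicit.
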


\begin{proof}
Because $\det(J-I) \neq 0$ and $J - I = XY$, it follows that $X$ is invertible.
Then we have the following:
\begin{align*}
Y &= X^{-1}(J-I)\\
&=  X^{-1}J - X^{-1} \\
&= \frac{1}{r}X^{-1}(rJ)-X^{-1}\\
&=\frac{1}{r}X^{-1}(XJ)-X^{-1} \\
&=\frac{1}{r}J-X^{-1}.
\end{align*}
\end{proof}

\begin{remark}
{\rm Theorem \ref{T2} also follows from the proof of \cite[Theorem 2]{CGHK}. The proof we give is a bit more direct.}
\end{remark}

Hence, we immediately obtain the following result as a corollary of Theorem \ref{equiv.thm}. Again, this result holds for all finite groups.

\begin{theorem}
\label{T3}
Suppose $(A,B)$ and $(A,B')$ are both near-factorizations of a finite group $G$. Then $B = B'$.
\end{theorem}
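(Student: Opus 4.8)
The plan is to translate the statement into the matrix language of Theorem~\ref{equiv.thm} and then read off the conclusion from Theorem~\ref{T2}. Write $r = |A|$. Since $(A,B)$ and $(A,B')$ are both near-factorizations of $G$, Theorem~\ref{equiv.thm} gives
\[
M(A)M(B) = J - I \qquad \text{and} \qquad M(A)M(B') = J - I ,
\]
so we have two factorizations of $J - I$ into $n \times n$ integral matrices that share the \emph{same} left factor $X = M(A)$. The idea is that this left factor determines the right factor uniquely, by Theorem~\ref{T2}.

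To invoke Theorem~\ref{T2} I first check its side condition $XJ = rJ$ for $X = M(A)$. This is immediate from the definition~\eqref{M.eq}: the nonzero entries in row $i$ of $M(A)$ are exactly the columns $j$ with $(g_i)^{-1}g_j \in A$, i.e.\ $g_j \in g_i A$, and there are precisely $|A| = r$ such columns; hence every row of $M(A)$ has row sum $r$, which says $M(A)J = rJ$. Now apply Theorem~\ref{T2} twice, with $X = M(A)$ and with $Y$ taken to be $M(B)$ and then $M(B')$ in turn (Theorem~\ref{T2} also supplies the invertibility of $X$, since $\det(J-I) \neq 0$). Each application yields
\[
M(B) = \frac{1}{r}J - M(A)^{-1} = M(B') ,
\]
so $M(B) = M(B')$ as integer matrices.

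It remains to deduce $B = B'$ from $M(B) = M(B')$, which follows because the map $H \mapsto M(H)$ is injective: using our convention $g_1 = e$, we have $M(H)_{1,j} = 1$ if and only if $(g_1)^{-1}g_j = g_j \in H$, so the first row of $M(H)$ is precisely the indicator vector of $H$. Comparing first rows of $M(B)$ and $M(B')$ gives $B = B'$. As for the main obstacle, there really is none of substance — all the work has already been done in Theorems~\ref{equiv.thm} and~\ref{T2}, and the only things needing a moment's attention are the one-line verification of the row-sum identity $M(A)J = rJ$ and the equally short observation that $H \mapsto M(H)$ is injective.
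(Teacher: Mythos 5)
Your proof is correct and follows exactly the route the paper intends: the paper derives Theorem~\ref{T3} as an immediate corollary of Theorems~\ref{equiv.thm} and~\ref{T2}, and you have simply filled in the routine details (the row-sum check $M(A)J = rJ$ and the injectivity of $H \mapsto M(H)$ via the first row) that the paper leaves implicit.
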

Informally, if $A$ has a mate $B$, then the mate is unique. 

\medskip

The remainder of this paper is organized as follows. In Section \ref{algs.sec}, we discuss two versions of an algorithm to compute mates. Section \ref{structural.sec} discusses some interesting structural properties of near-factorizations. Section \ref{nonexist.sec} reviews known existence results for near-factorizations. It was previously known (see \cite{CGHK,Pech}) that there is no nontrivial near-factorization of a noncyclic abelian group of order less than $100$. 
By employing a variety of theoretical techniques along with exhaustive computer searches, we extend this result, showing that 
there is no nontrivial near-factorization of a noncyclic abelian group of order less than $200$. Section \ref{SCEDF.sec} gives a simple new proof of the nonexistence of strong circular external difference families having more than two sets. In Section \ref{lambda.sec}, we find examples of nontrivial near-factorizations of  noncyclic abelian groups having index $\lambda = 2$.
Finally Section \ref{summary.sec} is a brief summary.

\subsection{Algorithms to compute mates}
\label{algs.sec}

Theorems \ref{T2} and \ref{T3} provide a convenient method to search for $(r,s)$-near-factorizations in a specified finite group $G$ of order $n$. 
We can consider all possible sets $A \subseteq G$ with $|A| = r$, and for each $A$, determine if there is a mate $B$. If $G$ is an abelian group, then we can assume that $A$ is {symmetric}, from Lemma \ref{symmetric.lem}.

We also need to recognize when a given $A$ does not have a mate.
Suppose we are given a set $A \subseteq G$ with $|A| = r$, where $|G|=n$ and $r$ divides $n-1$.
We would first construct the matrix $X= M(A)$. Then, if $X$ is invertible, $Y$ can be computed such that $XY = J - I$. Then the mate $B$ can be obtained from $Y$. We can proceed as shown in Algorithm \ref{alg1}.



\begin{flt}
\begin{algorithm}{alg1}{Computing a mate}
\begin{enumerate}
\item Input: A finite group $G$ and a subset $A \subseteq G$ with $|A| = r$, where $|G| \equiv 1 \bmod r$.
\item Let $s = (n-1)/r$ (if there is a near-factorization $(A,B)$ of $G$, then $|B| = s$). Compute $X = M(A)$. From (\ref{M.eq}), it is easily seen that the first row of $X$ has a $1$ in column $g$ if and only if $g \in A$. $X$ has row and column sums equal to $r$, so $XJ = rJ$.
\item Compute $X^{-1}$ over the rationals.  If $X^{-1}$ does not exist, then QUIT (in this case, $A$ does not have a mate $B$). 
$X^{-1}$ 
has all row and column sums equal to $1/r$.
\item Compute \[ Y  = \frac{1}{r} J - X^{-1}.\] 
$Y$ is a 
matrix with row and column sums equal to 
$ \frac{n}{r} - \frac{1}{r} = s$.
If $Y$ is not a $0$-$1$ matrix, then QUIT ($A$ does not have a mate $B$ in this case).
\item Compute $B = \{g \colon  \text{the first row of $Y$ has a $1$ in column $g$}\}.$ Note that it must be the case that
$|B| = s$.
The pair of sets $(A,B)$ is the unique near-factorization of $G$ in which $A$ is one of the two sets.
\end{enumerate}
\end{algorithm}
\end{flt}

We observe that, if $G$ is the cyclic group $\zed_n$, then the matrices $X$, $X^{-1}$ and $Y$ are circulant matrices.

\begin{example}
\label{exam1}
{\rm
Suppose $G = \zed_7$, so $n = 7$, and suppose $A = \{0,3\}$. Because we are working in an additive group, we seek $B$ such that $|B| = 3$ and
$A+B = \zed_7 \setminus \{0\}$. In step 1, we obtain
\[ X = M(A) = 
\left(
\begin{array}{ccccccc}
1 & 0 & 0 & 1 & 0 & 0 & 0 \\
0 & 1 & 0 & 0 & 1 & 0 & 0 \\
0 & 0 & 1 & 0 & 0 & 1 & 0 \\
0 & 0 & 0 & 1 & 0 & 0 & 1 \\
1 & 0 & 0 & 0 & 1 & 0 & 0 \\
0 & 1 & 0 & 0 & 0 & 1 & 0 \\
0 & 0 & 1 & 0 & 0 & 0 & 1 \\
\end{array}
\right).
\]
The rows and columns of $X$ are labelled (from left to right and top to bottom) as $0, 1, \dots , 6$.

In step 2, we compute 
\[ X^{-1} = 
\left(
\begin{array}{rrrrrrr}
\frac{1}{2} & -\frac{1}{2} & -\frac{1}{2} & -\frac{1}{2} & \frac{1}{2} & \frac{1}{2} & \frac{1}{2} \vspace{.05in}\\
\frac{1}{2} & \frac{1}{2} & -\frac{1}{2} & -\frac{1}{2} & -\frac{1}{2} & \frac{1}{2} & \frac{1}{2} \vspace{.05in}\\
\frac{1}{2} & \frac{1}{2} & \frac{1}{2} & -\frac{1}{2} & -\frac{1}{2} & -\frac{1}{2} & \frac{1}{2} \vspace{.05in}\\
\frac{1}{2} & \frac{1}{2} & \frac{1}{2} & \frac{1}{2} & -\frac{1}{2} & -\frac{1}{2} & -\frac{1}{2} \vspace{.05in}\\
-\frac{1}{2} & \frac{1}{2} & \frac{1}{2} & \frac{1}{2} & \frac{1}{2} & -\frac{1}{2} & -\frac{1}{2} \vspace{.05in}\\
-\frac{1}{2} & -\frac{1}{2} & \frac{1}{2} & \frac{1}{2} & \frac{1}{2} & \frac{1}{2} & -\frac{1}{2} \vspace{.05in}\\
-\frac{1}{2} & -\frac{1}{2} & -\frac{1}{2} & \frac{1}{2} & \frac{1}{2} & \frac{1}{2} & \frac{1}{2} \vspace{.05in}\\
\end{array}
\right).
\]

Then, in step 3, we compute 
\begin{align*}
 Y &= M(B)\\
 &= \frac{1}{2}J - X^{-1}\\
&=
\left(
\begin{array}{ccccccc}
0 & 1 & 1 & 1 & 0 & 0 & 0 \\
0 & 0 & 1 & 1 & 1 & 0 & 0 \\
0 & 0 & 0 & 1 & 1 & 1 & 0 \\
0 & 0 & 0 & 0 & 1 & 1 & 1 \\
1 & 0 & 0 & 0 & 0 & 1 & 1 \\
1 & 1 & 0 & 0 & 0 & 0 & 1 \\
1 & 1 & 1 & 0 & 0 & 0 & 0 \\
\end{array}
\right).
\end{align*}
The set $B$ is given by the positions of the $1$'s in the first row of $Y$: $B = \{1,2,3\}$. Thus the unique mate for $\{0,3\}$ is
$\{1,2,3\}$. We can easily verify that $\{0,3\} + \{1,2,3\} = \{1,2,3,4,5,6\}$.
}$\hfill\blacksquare$
\end{example}

Step 2 in Algorithm \ref{alg1} requires finding an inverse of an $n \times n$ matrix. This can be replaced by solving a sparse system of $n$ linear equations in $n$ unknowns. We present an optimized algorithm now, which  
calculates $B$ from the solution to this linear system.

Consider the matrix equation $XY = J-I$. We really only need the first column of the matrix $Y$.
If $X$ is invertible, then it suffices to solve the linear system 
\begin{equation}
\label{system.eq}
 X (z_0, \dots , z_{n-1})^T = (0,1, \dots , 1)^T
 \end{equation}
to obtain the vector $\mathbf{z}  = (z_0, \dots , z_{n-1})$.
This linear system consists of $n$ equations in $n$ unknowns. 
Suppose $\mathbf{z}$ is a $0$-$1$ vector. Then it must have Hamming weight equal to $s$ and we can compute $B$ from $\mathbf{z}$.
Because $\mathbf{z}^T$ is the first column of $Y$, it follows from (\ref{M.eq}) that
\[z_j = y_{j,0} = 1 \Leftrightarrow -j \in B.\]
Hence, the set $B$ can be  computed as
\begin{equation}
\label{B.eq}
B = \{ -g  \colon  z_{g} = 1\}.
\end{equation} 

Our modified algorithm is presented as Algorithm \ref{alg2}.

\begin{flt}
\begin{algorithm}{alg2}{Computing a mate (optimized version)}
\begin{enumerate}
\item 
Solve the linear system (\ref{system.eq}). (If the system does not have a unique solution, then QUIT.)
\item If $\mathbf{z}$ is not a $0$-$1$ vector, then  QUIT ($A$ does not have a mate in this case).
\item Compute $B$ using (\ref{B.eq}). The pair of sets $(A,B)$ is the unique near-factorization of $G$ in which $A$ is one of the two sets.
\end{enumerate}
\end{algorithm}
\end{flt}



\begin{example}
{\rm
As in Example \ref{exam1}, let $G = \zed_7$ and $A = \{0,3\}$.
The linear system we want to solve is 
\begin{align*}
z_0 + z_3 & = 0\\
z_1 + z_4 & = 1\\
z_2 + z_5 & = 1\\
z_3 + z_6 & = 1\\
z_4 + z_0 & = 1\\
z_5 + z_1 & = 1\\
z_6 + z_2 & = 1.
\end{align*}
The unique solution to this system is 
\[ z_4 = z_5 = z_6 = 1, \quad z_0 = z_1  = z_2 = z_3 = 0.
\]
Hence, using (\ref{B.eq}), we get 
$B = \{ 1,2,3\}$, agreeing with  Example \ref{exam1}.
}$\hfill\blacksquare$
\end{example}

The system (\ref{system.eq}) of $n$ equations in $n$ unknowns is sparse, because each equation only involves $|A|$ variables. We can assume $|A| < |B|$, so $|A| < \sqrt{n}$, because $|A| \times |B| = n-1$. Hence, there are at most $\sqrt{n}$ nonzero coefficients in each linear equation.


Both algorithms for computing a mate $B$ for a given $A$ are very fast.
As an example, suppose $G = \zed_{199}$ and 
$A = \{0, 1, 2, 3, 4, 195, 196, 197, 198\}$. The matrix $X$ is a $199 \times 199$ matrix. Algorithm \ref{alg1} explicitly computes the matrix $X^{-1}$. Here, the \textbf{MatrixInverse} command in Maple takes about 3.2 seconds to execute.\footnote{We are running Maple 2024 on a 32 GB 2021 MacBook Pro with an Apple M1 Max chip.} 

On the other hand, in Algorithm \ref{alg2}, we solve the  sparse linear system (\ref{system.eq}). The relevant Maple command to do this is \textbf{LinearSolve} with
the \texttt{method=\textquotesingle modular\textquotesingle} option.
This approach requires less than $.14$ seconds for the same example in $\zed_{199}$, so it is more than 20 times faster.

\medskip

In previous papers, searches for near-factorizations have involved first choosing $A$ and then doing an exhaustive search for a mate $B$. 
This basically requires examining all possible sets $B$ of the appropriate size to see if one of them is a mate to the given $A$.
The approach we have described replaces the exhaustive search for $B$ by a simple direct computation, as described in Algorithm \ref{alg1} or \ref{alg2}.

\section{Structural properties of near-factorizations}
\label{structural.sec}

\subsection{Involutions}
\label{involutions.sec}

Suppose we want to perform an exhaustive search for an $(r,s)$-near-factorization of an abelian group $G$ of order $n$. Basically, we will generate all symmetric subsets $A \subseteq G$ with $|A| = r$. Then, for each $A$, we run Algorithm \ref{alg2} to see if a mate $B$ exists. 

We define an \emph{involution} in an additive group $G$ to be any element $x$ such that $x = -x$. Thus we are treating $0$ (the identity) as an involution in this paper.
Because $A$ is symmetric, it will consist of involutions along with pairs of group elements $\{x, -x\}$ where $x$ is not an involution (we say that these are \emph{symmetric pairs}). Suppose that $G$ contains $t_1$ involutions; then there are $t_2 = (n-t_1)/2$ symmetric pairs in $G$.  $A$ will consist of $i_1$ involutions and
$i_2 = (r-i_1)/2$ symmetric pairs, where $0 \leq i_1 \leq t_1$ and $r-i_1$ is even. For each possible value of $i_1$, we need to consider all sets $A$ that consist of $i_1$ of the $t_1$ possible involutions along with $i_2$ of the $t_2$ possible symmetric pairs. 
However, for certain groups $G$, we can speed up the search by limiting the number of sets $A$ that we need to examine. We can reduce the size of the search by considering automorphisms of $G$. 

To illustrate the main ideas, suppose $G = \zed_t \times (\zed_2)^k$, where $t$ is odd. 
We will assume $G$ is non-cyclic, so $k \geq 2$. 
Because $t$ is odd, we have $\Aut(G) = \Aut(\zed_t) \times \Aut((\zed_2)^k)$. 
The automorphism group  $\Aut(\zed_t)$ consists of all mappings of the form $x \mapsto cx$, where $\gcd(c,x) = 1$. The automorphism group $\Aut((\zed_2)^k)$ is the general linear group $\mathrm{GL}(k,2)$, which consists of of all $k$ by $k$ invertible matrices over $\zed_2$.

Because $|G|$ is even, $|A|$ is odd and therefore we require an odd number of involutions in $A$. 
The involutions in $G$ have the form $(0, x)$, where $x$ is any element in $(\zed_2)^k$. 
Suppose we only want one involution in $A$, say $(0,x)$. Then there are two possible cases up to equivalence: $x = 0$ and $x \neq 0$ (note all $2^k-1$ values $x \neq 0$ are equivalent under a linear transformation in $\mathrm{GL}(k,2)$).

Suppose we want $i_1$ involutions in $A$. Then we need to find all the orbits of $i_1$-subsets of involutions under $\mathrm{GL}(k,2)$. In the case $i_1= 1$, there are two orbits, as we discussed above. 

If $k = 2$ and $i_1 = 3$, there are four possible $3$-subsets of elements of $(\zed_2)^2$.
These four subsets are partitioned into two orbits under the action of $\mathrm{GL}(k,2)$.
Orbit representatives are as follows:
\begin{enumerate}
\item $\bigl\{(0,0), (0,1), (1,0)\bigr\}$; and
\item $\bigl\{(0,1), (1,0), (1,1)\bigr\}$.
\end{enumerate}

The next case is $k = 3$ and $i_1 = 3$. There are $\binom{8}{3} = 56$ $3$-subsets of elements of $(\zed_2)^3$, which are partitioned into three orbits. Orbit representatives as follows:
\begin{enumerate}
\item $\bigl\{(0,0,0), (0,0,1), (0,1,0)\bigr\}$ (the $0$-vector and two other linearly independent vectors);
\item $\bigl\{(0,0,1), (0,1,0), (0,0,1)\bigr\}$ (three linearly independent vectors); and
\item $\bigl\{(0,0,1), (0,1,0), (0,1,1)\bigr\}$ (two linearly independent vectors and their sum).
\end{enumerate}
Here, there is a considerable savings. Instead of testing all $\binom{8}{3} = 56$ sets of three involutions, we just need to consider three sets of three involutions (one from each orbit).

Finally, when we choose the symmetric pairs to be included in $A$, we can assume that the first pair chosen is a minimum representative of an orbit under the action of  $\Aut(\zed_t)$. If $t$ is prime, then this minimum representative has first co-ordinate equal to $1$.

\medskip

In Section \ref{nonexist.sec}, we will also consider some groups of the form 
$G = \zed_t \times \zed_2 \times \zed_4$, where $t$ is odd. 
We have $\Aut(G) = \Aut(\zed_t) \times \Aut(\zed_2 \times \zed_4) $ and we
focus on $\Aut(\zed_2 \times \zed_4)$.
The group $G$ has three non-identity involutions,
namely $(0,1,0)$, $(0,1,2)$ and $(0,0,2)$. 
Using techniques discussed in \cite{HR}, the following facts can be established. First, it can be shown that
$\Aut(\zed_2 \times \zed_4 ) = \zed_2 \times \zed_4$. Every  automorphism in $\Aut(\zed_2 \times \zed_4 )$ fixes $(0,0)$ and $(0,2)$. Finally $(1,0)$, $(1,2)$ are in the same orbit under the action of $\Aut(\zed_2 \times \zed_4)$.
It follows that, if $A$ contains exactly one involution, say $x$, then an exhaustive search will need to consider three possible cases: $x = (0,0,0)$, $(0,1,0)$ and $(0,0,2)$.

We fill in a few details to justify these assertions. A mapping $\theta \in \Aut(\zed_2 \times \zed_4 )$ is determined by $\theta(1,0)$ and $\theta(0,1)$. First, $\theta(0,1)$ must be an element of order four, 
so \[\theta(0,1) \in X = \{ (0,1), (0,3), (1,1), (1,3)\} .\]
Next,  $\theta(1,0)$ must be an element of order two, 
so \[\theta(1,0) \in \{ (1,0), (0,2), (1,2)\}.\] However, it is easy to see that 
$(0,2) \in \langle x \rangle$ for all $x \in X$. Hence, $\theta(1,0) = (0,2)$ is not possible and therefore
\[\theta(1,0) \in \{ (1,0), (1,2)\}.\]
Thus we have eight automorphisms. It is now a simple computation to verify that 
each of these eight automorphisms fixes $(0,2)$. 
Further, four automorphisms fix $(1,0)$ and $(1,2)$, and the other four 
automorphisms interchange $(1,0)$ and $(1,2)$.

\medskip

We obtain a  similar result for the group $G = \zed_t \times \zed_2 \times \zed_8$ (where $t$ is odd). This group $G$ has three non-identity involutions,
namely $(0,1,0)$, $(0,1,4)$ and $(0,0,4)$. Each automorphism in $\Aut(\zed_2 \times \zed_8 )$ fixes $(0,0)$ and $(0,4)$; and  $(1,0)$ and $(1,4)$ are in the same orbit under the action of $\Aut(\zed_2 \times \zed_8)$.
Hence, if $A$ contains exactly one involution, say $x$, then an exhaustive search will need to consider three possible cases: $x = (0,0,0)$, $(0,1,0)$ and $(0,0,4)$.


\subsection{Groups of the form $G = \zed_t \times (\zed_2)^2$}

We now establish some additional structural properties of near-factorizations in $G = \zed_t \times (\zed_2)^2$, where $t$ is odd.  Suppose $(A,B)$ is an $(r,s)$-near-factorization of $G$. Because $|G| = 4t \equiv 0 \bmod 4$, it follows that one of $r$ and $s$ is congruent to $1$ modulo $4$ and the other is congruent to $3$ modulo $4$. By interchanging $A$ and $B$ if necessary, we can assume that
  $r \equiv 1 \bmod 4$ and  $s \equiv 3 \bmod 4$.

For $i,j \in \{0,1\}$, define $G_{i,j} = \zed_t \times \{i\} \times \{j\}$, let $A_{i,j} = A \cap G_{i,j}$ and  let $B_{i,j} = B \cap G_{i,j}$. 
Denote $a_{i,j} = |A_{i,j}|$ and denote $b_{i,j} = |B_{i,j}|$.
We can write down several equations involving the $a_{i,j}$'s and $b_{i,j}$'s:
\begin{align}
a_{0,0}b_{0,0} + a_{0,1}b_{0,1} + a_{1,0}b_{1,0} + a_{1,1}b_{1,1} &= t-1\label{eq11}\\
a_{0,0}b_{0,1} + a_{0,1}b_{0,0} + a_{1,0}b_{1,1} + a_{1,1}b_{1,0} &= t\label{eq12}\\
a_{0,0}b_{1,0} + a_{0,1}b_{1,1} + a_{1,0}b_{0,0} + a_{1,1}b_{0,1} &= t\label{eq13}\\
a_{0,0}b_{1,1} + a_{0,1}b_{1,0} + a_{1,0}b_{0,1} + a_{1,1}b_{0,0} &= t\label{eq14}\\
a_{0,0} + a_{0,1} + a_{1,0} + a_{1,1} &= r\label{eq15}\\
b_{0,0} + b_{0,1} + b_{1,0} + b_{1,1} &= s.\label{eq16}
\end{align}
Equation (\ref{eq11}) is obtained by counting sums  $g+h \in G_{0,0}$, where $g \in A$, $h \in B$. Similarly, equations (\ref{eq12})--(\ref{eq14}) result from counting sums in $G_{0,1}$, $G_{1,0}$ and $G_{1,1}$, respectively. Equations (\ref{eq15}) and (\ref{eq16}) count elements in $A$ and $B$, respectively.

If an $(r,s)$-near-factorization of $G$ exists, then the system of equations (\ref{eq11})--(\ref{eq16}) has a solution in nonnegative integers.

We again assume that $A$ and $B$ are symmetric. There are four involutions in $G$, namely, 
$(0,0,0)$, $(0,0,1)$, $(0,1,0)$ and $(0,1,1)$, and each $G_{i,j}$ contains one involution. 
It is easy to see that every involution can occur in at most one of the eight sets $A_{i,j}$ ($i,j \in \{0,1\}$) and $B_{i,j}$ ($i,j \in \{0,1\}$).

Computing (\ref{eq11}) $-$ (\ref{eq12}) $+$ (\ref{eq13}) $-$ (\ref{eq14}), we have
\begin{align*}
(a_{0,0}- a_{0,1} + a_{1,0}- a_{1,1})(b_{0,0}-b_{0,1} + b_{1,0}-b_{1,1} )  &= -1 
\end{align*}
Hence,
\begin{equation}
a_{0,0}- a_{0,1} + a_{1,0}- a_{1,1} = \pm 1 \quad\quad  \text{and}  \quad\quad b_{0,0}-b_{0,1} + b_{1,0}-b_{1,1} = \mp 1.
\label{eq20}\end{equation}
Adding the two equations in  (\ref{eq20}) to (\ref{eq15}) and (\ref{eq16}), resp., we see that
that 
\begin{equation} a_{0,0} + a_{1,0} = \frac{r\pm 1}{2} \quad\quad  \text{and}  \quad\quad b_{0,0}  + b_{1,0} = \frac{s \mp 1}{2}.
\label{eq22}
\end{equation}

Now compute
(\ref{eq11}) $+$ (\ref{eq12}) $-$ (\ref{eq13}) $-$ (\ref{eq14}), to get
\begin{align*}
(a_{0,0}+ a_{0,1} - a_{1,0}- a_{1,1})(b_{0,0}+b_{0,1} - b_{1,0}-b_{1,1} )  &= -1 
\end{align*}
Hence,
\begin{equation}
a_{0,0}+ a_{0,1} - a_{1,0}- a_{1,1} = \pm 1 \quad\quad  \text{and}  \quad\quad b_{0,0}+b_{0,1} - b_{1,0}-b_{1,1} = \mp 1.
\label{eq25}\end{equation}
Adding the two equations in  (\ref{eq25}) to (\ref{eq15}) and (\ref{eq16}), resp., we see that
\begin{equation} a_{0,0} +a_{0,1} = \frac{r \pm 1}{2} \quad\quad  \text{and}  \quad\quad b_{0,0}  + b_{0,1} = \frac{s \mp 1}{2}. 
\label{eq27}
\end{equation}

Lastly, compute
(\ref{eq11}) $-$ (\ref{eq12}) $-$ (\ref{eq13}) $+$ (\ref{eq14}), to get
\begin{align*}
(a_{0,0}- a_{0,1} - a_{1,0}+ a_{1,1})(b_{0,0}-b_{0,1} - b_{1,0}+b_{1,1} )  &= -1 
\end{align*}
Hence,
\begin{equation}
a_{0,0}- a_{0,1} - a_{1,0}+ a_{1,1} = \pm 1 \quad\quad  \text{and}  \quad\quad b_{0,0}-b_{0,1} - b_{1,0}+b_{1,1} = \mp 1.
\label{eq30}\end{equation}
Adding the two equations in  (\ref{eq30}) to (\ref{eq15}) and (\ref{eq16}), resp., we see that 
\begin{equation} a_{0,0} + a_{1,1} = \frac{r \pm 1}{2} \quad\quad  \text{and}  \quad\quad b_{0,0}  + b_{1,1} = \frac{s \mp 1}{2}.
\label{eq31}
\end{equation}
Now, from (\ref{eq22}), (\ref{eq27}) and (\ref{eq31}), adding the equations involving the $a_{i,j}$'s, we see that
\[ 3a_{0,0} + a_{0,1} +  a_{1,0} + a_{1,1} = 3\left( \frac{r-1}{2}\right) + \theta, \]
where $\theta \in \{0,1,2,3\}$.
Subtracting (\ref{eq15}), we have
\[ a_{0,0}  = \frac{r-3 + 2\theta}{4}, \]
$\theta \in \{0,1,2,3\}$.
Recalling that $r \equiv 1 \bmod4$, we see that $\theta$ must be odd, so $\theta = 1$ or $3$.
Hence $a_{0,0} = (r-1)/4$ or $(r+3)/4$. 

First, suppose that $a_{0,0} = (r+3)/4$, so $\theta = 3$.
This means that
\[  a_{0,0} + a_{0,1} = a_{0,0} + a_{1,0} = a_{0,0} + a_{1,1} = \frac{r + 1}{2},\]
so 
\[  a_{0,1} =  a_{1,0} = a_{1,1} = \frac{r - 1}{4}.\]
In the other case, $a_{0,0} = (r-1)/4$ and $\theta = 1$.
It turns out that one of $a_{0,1},a_{1,0}$ and $a_{1,1}$ equals $(r+3)/4$ and the other two equal $(r-1)/4$. 
In both cases, three of $a_{0,0}, a_{0,1}, a_{1,0}$ and $a_{1,1}$ equal $(r-1)/4$ and the other equals $(r+3)/4$.

Similar computations show that three of $b_{0,0}, b_{0,1}, b_{1,0}$ and $b_{1,1}$ equal $(s+1)/4$ and the other equals $(s-3)/4$.

Now, if we examine equations (\ref{eq11})--(\ref{eq14}), it is possible to prove that they are satisfied if and only if $a_{i,j} = (r+3)/4$ and  $b_{i,j} = (s-3)/4$ for the same indices $i$ and $j$.

Write $r = 4u+1$ and $s = 4v-1$. Then the values $a_{i,j}$ include
$u$ three times and $u+1$ once, and the values $b_{i,j}$ include
$v$ three times and $v-1$ once. Because  $rs = 4t-1$, we have
$(4u+1)(4v-1) = 4t-1$, so $4uv + v-u = t$. Because $t$ is odd, $u$ and $v$ have different parity.
It follows that, for each ordered pair $(i,j)$, exactly one of $a_{i,j}$ and $b_{i,j}$ is even. The odd values of 
the $a_{i,j}$'s and $b_{i,j}$'s correspond to $A_{i,j}$'s and $B_{i,j}$'s that contain involutions.

\begin{remark}
{\rm The analysis given above applies to any abelian group of the form $H \times (\zed_2)^2$ where $|H|$ is odd. }
\end{remark}

We provide an example to show how the above discussion can greatly reduce the search time for a near-factorization (for certain parameters).

\begin{example}
\label{exam137}
{\rm Suppose we seek a $(13,7)$-near-factorization in $\zed_{23} \times (\zed_2)^2$. The existence of this near-factorization  was ruled out in \cite{Pech} using a computer search, but no details were provided. We have $13 = 4 \times 3 + 1$ and $7 = 4\times 2 - 1$. 
The $a_{i,j}$-values are $3, 3, 3$ and $4$ (they sum to $13$)
and the $b_{i,j}$-values are $2, 2, 2$ and $1$ (they sum to $7$).
Suppose that $a_{i,j} = 4$ and $b_{i,j} = 1$. There are four cases to consider, which correspond to the four possible choices of $(i,j)$. 

Suppose for the purposes of illustration that $(i,j) = (0,0)$ (this is one of the four cases).  
Then $a_{0,0} = 4$, $a_{0,1} = a_{1,0} = a_{1,1} = 3$, $b_{0,0} = 1$ and  $b_{0,1} = b_{1,0} = b_{1,1} = 2$.
Then $(0,0,0) \in B_{0,0}$, $(0,0,1) \in A_{0,1}$, $(0,1,0) \in A_{1,0}$ and $(0,1,1) \in A_{1,1}$. 
The three sets $A_{0,1}$,
$A_{1,0}$ and $A_{1,1}$ each contain one symmetric pair; $A_{0,0}$ contains two symmetric pairs; $B_{0,1}$,
$B_{1,0}$ and $B_{1,1}$ each contain one symmetric pair; and $B_{0,0}$ contains no symmetric pairs.
Note that there are only $11$ symmetric pairs in each $G_{i,j}$, namely,
\[ \bigl\{(1,i,j),(22,i,j)\bigr\}, \bigl\{(2,i,j),(21,i,j)\bigr\}, \dots , \bigl\{(11,i,j),(12,i,j)\bigr\} .\]
So an exhaustive search will not take long at all.

Computationally, the most efficient way would be to start with $B$ (because $|B| < |A|$). To construct $B$, we just need to choose three symmetric pairs---one from $B_{0,1}$, one from 
$B_{1,0}$ and one from $B_{1,1}$. The symmetric pair in $B_{0,1}$ can be assumed to be $\{(1,0,1),(22,0,1)\}$, by applying a (multiplicative) automorphism of $\zed_{23}$. So there are only $11^2 = 121$ possible $B$'s to be considered. For each $B$, we check to see if it has a mate $A$ using Algorithm \ref{alg1} or \ref{alg2}.
 
The other three cases (namely, $(i,j) = (0,1)$, $(1,0)$ and $(1,1)$) would be handled in a similar fashion. They would each require checking only $121$ possible sets $B$. However, we do not need to consider all three of these cases, since they are equivalent under the action of $\mathrm{GL}(2,2)$, as discussed in Section \ref{involutions.sec}. So it suffices to consider only the  two cases  $(i,j) = (0,0)$ and $(i,j) = (0,1)$.
$\hfill\blacksquare$
}
\end{example}

\begin{remark}\label{2.2.remark}
{\rm We have analyzed the structure of possible near-factorizations in the two groups $\zed_{47} \times (\zed_2)^2$ and $\zed_{49} \times (\zed_2)^2$ in a similar fashion. Indeed, these analyses were used to greatly speed up exhaustive searches that are reported in Table \ref{nonexistence.tab}.}
\end{remark}

\section{Nonexistence results for near-factorizations in noncyclic abelian groups}
\label{nonexist.sec}

We are interested in nonexistence results for near-factorizations in noncyclic abelian groups. It is known (mainly from results in de Caen {\it et al.} \cite{CGHK}) that there is no near-factorization in a noncyclic abelian group of order at most $100$. We review the proof of this result and explore some larger possibilities now.

The following  nonexistence results from \cite{CGHK} are very useful.

\begin{theorem}[D. de Caen {\it et al.}, {\cite[Proposition 3]{CGHK}}]
\label{CGHK-prop}
Suppose $G$ is an abelian group and $G \setminus \{e\} = AB$ is a near-factorization with $|A| \leq 4$. Then $G$ must be a cyclic group.
\end{theorem}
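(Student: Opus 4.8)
\emph{Proof plan.} The statement is really about \emph{nontrivial} near-factorizations: if $|A|=1$, say $A=\{a\}$, then $B=a^{-1}(G\setminus\{e\})$ is a mate in every finite group, so I assume $2\le|A|\le4$ and $|B|\ge2$. Replacing $(A,B)$ by $(a^{-1}A,\,aB)$ for some $a\in A$ (still a near-factorization, as $G$ is abelian), I may assume $e\in A$ and write $A=\{e,a_1,\dots,a_{k-1}\}$, $k=|A|$. From $|A|\cdot|B|=|G|-1=|AB|$ and $AB=B\cup a_1B\cup\dots\cup a_{k-1}B$ it follows that this union is disjoint, i.e.\ $B,a_1B,\dots,a_{k-1}B$ partition $G\setminus\{e\}$. (By Lemma~\ref{symmetric.lem} one may also take $A$ symmetric, though this is not needed.)

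\textbf{Step 1: $A$ generates $G$.} Let $H=\langle A\rangle$. For every coset $gH$ and every $i$ we have $a_iB\cap gH=a_i(B\cap gH)$, since $a_i\in H$. Intersecting the partition with $gH$ thus gives a disjoint factorization $A\cdot(B\cap gH)=gH$ when $e\notin gH$, and $A\cdot(B\cap H)=H\setminus\{e\}$ when $gH=H$ (with $A$ regarded inside $H$). If $H\ne G$, picking $g\notin H$ the first relation forces $k\mid|H|$ and the second forces $k\mid|H|-1$, which is impossible because $\gcd(|H|,|H|-1)=1$ and $k\ge2$. Hence $H=G$. Since $(B,A)$ is also a near-factorization, the same conclusion holds with $B$ in place of $A$ when $|B|\le4$. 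In particular, if $|A|=2$ (so $G=\langle a_1\rangle$) or $|B|=2$, then $G$ is cyclic and we are done.

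\textbf{Step 2: from $|A|\in\{3,4\}$ to a character condition.} Suppose $G$ is not cyclic; I seek a contradiction. By the structure theorem $G$ has a quotient $Q=G/N\cong\zed_p\times\zed_p$ for some prime $p$. Writing $\hat S=\sum_{g\in S}g\in\zed[G]$, the near-factorization condition (equivalent to Theorem~\ref{equiv.thm}) is $\hat A\hat B=\hat G-e$; projecting to $\zed[Q]$ gives $\overline A\,\overline B=|N|\hat Q-\bar e$, where $\overline A,\overline B\in\zed[Q]$ have nonnegative integer coefficients summing to $|A|$ and $|B|$. Applying a nontrivial character $\psi$ of $Q$ (pulled back to $G$) and using $\psi(\hat Q)=0$ yields
\[
\psi(\overline A)\,\psi(\overline B)=-1 .
\]
Hence $\psi(\overline A)=\sum_{a\in A}\psi(\bar a)$, a sum of at most four $p$-th roots of unity, is a \emph{unit} of $\zed[\zeta_p]$; this must hold simultaneously for all $p^2-1$ nontrivial $\psi$, and (by Step 1) the image of $A$ must generate $Q$.

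\textbf{Step 3: eliminating the candidates --- the hard part.} One must show no such $\overline A$ exists. When $p$ is large this is quite restrictive: a sum of at most four $p$-th roots of unity can be a unit of $\zed[\zeta_p]$ only in a few ``degenerate'' ways (controlled by Mann's theorem on vanishing sums of roots of unity and by bounds on their norms), none of which simultaneously generates $\zed_p\times\zed_p$ and admits a partner $\psi(\overline B)=-1/\psi(\overline A)$ whose inverse transform has nonnegative integer entries. The genuine obstacle is the small primes --- for $|A|=3$ we get $p\in\{2,5\}$ (since $3\mid|G|-1$), and for $|A|=4$ we get $p\in\{3,5\}$ (since $|G|$ is odd) --- where short unit sums really occur (e.g.\ $1+\zeta_5+\zeta_5^{-1}$ is the golden ratio, a unit of $\zed[\zeta_5]$), so the unit condition alone is consistent. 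For these cases one has to combine all the character equations for both $\overline A$ and $\overline B$ with the divisibility $|A|\mid|G|-1$, and/or analyze the partition $B,a_1B,\dots$ directly, coset by coset modulo a cyclic subgroup $\langle a_1\rangle$ --- exactly as in the $|A|=2$ argument, where for $|A|=3$ the recursion even linearizes over $\eff_2$ on the configurations that can occur. I expect this final elimination, for $|A|\in\{3,4\}$ and $p\in\{2,3,5\}$, to be the bulk of the proof; Steps 1--2 are routine.
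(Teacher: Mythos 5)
Your Steps 1 and 2 are correct and cleanly executed: the coset-intersection argument showing $k\mid|H|$ and $k\mid|H|-1$ forces $\langle A\rangle=G$ (which settles $|A|=2$), and the projection to a quotient $Q\cong\zed_p\times\zed_p$ followed by a nontrivial character does yield $\psi(\overline A)\,\psi(\overline B)=-1$. But the proposal does not prove the theorem. The entire content of the statement for $|A|\in\{3,4\}$ lives in your Step 3, which you explicitly leave as a description of what ``one must show'' rather than a demonstration. Two concrete holes: first, the restriction of the relevant primes to $p\in\{2,5\}$ (for $|A|=3$) and $p\in\{3,5\}$ (for $|A|=4$) is asserted but not justified --- a noncyclic abelian $G$ has $\zed_p\times\zed_p$ as a quotient for \emph{any} prime $p$ with a noncyclic Sylow $p$-subgroup, and eliminating large $p$ requires the Mann-theorem/norm analysis you only gesture at (the parenthetical reasons ``$3\mid|G|-1$'' and ``$|G|$ is odd'' exclude $p=3$ for $|A|=3$ and $p=2$ for $|A|=4$, nothing more). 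Second, even for the small primes where the unit condition is genuinely satisfiable (your $1+\zeta_5+\zeta_5^{-1}$ example), no contradiction is actually derived. As it stands the argument establishes only the $|A|=2$ case.

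For what it is worth, the paper itself offers no proof to compare against: it imports this result verbatim from de Caen \emph{et al.}\ \cite{CGHK} as Proposition~3. You might also note that the character condition you reach in Step 2 is essentially the mechanism behind Theorem~\ref{quotient} in the paper, which already disposes of $p=3$ and $p=5$ for $|A|\le 4$ (e.g.\ $|A|^{4}\not\equiv 1 \bmod 25$ for $|A|\in\{2,3,4\}$); the residual and genuinely delicate case is $|A|=3$ with a $\zed_2\times\zed_2$ quotient, where $|A|\equiv -1\bmod 4$ passes that test and a different argument is required. Completing that case is where a proof would have to do real work.
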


\begin{corollary}
\label{3p+1}
Suppose $n = 3p+1$ where $p$ is prime. Then there does not exist a noncyclic abelian group of order $n$ having a nontrivial near-factorization.
\end{corollary}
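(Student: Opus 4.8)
The plan is to reduce the statement directly to Theorem~\ref{CGHK-prop} by showing that any nontrivial near-factorization of a group of order $n=3p+1$ must contain a set of size at most~$4$.

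First I would argue by contradiction: suppose $G$ is a noncyclic abelian group with $|G| = n = 3p+1$ that admits a nontrivial near-factorization $(A,B)$. By definition of near-factorization, $|A|\,|B| = |G|-1 = 3p$, and nontriviality gives $|A| \geq 2$ and $|B| \geq 2$. So the proof comes down to understanding the ways of writing $3p$ as a product of two integers each at least~$2$.

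Next I would enumerate these factorizations using the hypothesis that $p$ is prime. If $p > 3$, the divisors of $3p$ are $1,3,p,3p$, so the only factorization with both factors at least $2$ is $\{3,p\}$; if $p = 3$ the only one is $\{3,3\}$; and if $p = 2$ the only one is $\{2,3\}$. In every case one of $|A|,|B|$ equals $2$ or $3$, so $\min(|A|,|B|) \leq 4$. Since $G$ is abelian, $AB = BA$, hence $(A,B)$ is a near-factorization if and only if $(B,A)$ is; after possibly interchanging $A$ and $B$ we may therefore assume $|A| = \min(|A|,|B|) \leq 4$. At this point Theorem~\ref{CGHK-prop} applies to the near-factorization $G \setminus \{e\} = AB$ and forces $G$ to be cyclic, contradicting our choice of $G$.

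There is essentially no hard step here; the corollary is an immediate consequence of Theorem~\ref{CGHK-prop} once the factorizations of $3p$ are listed. The only place where care is needed is the small-prime bookkeeping: one must not overlook the cases $p = 2$ and $p = 3$ when describing the divisors of $3p$, but including them causes no difficulty, and in fact for those values the conclusion is vacuous since there is no noncyclic abelian group of order $7$ or $10$.
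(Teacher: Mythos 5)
Your proof is correct and is exactly the intended argument: the paper states this as an immediate corollary of Theorem~\ref{CGHK-prop} without writing out a proof, and the deduction is precisely your observation that every factorization of $n-1=3p$ into two factors each at least $2$ has a factor equal to $2$ or $3$, so after swapping $A$ and $B$ (legitimate since $G$ is abelian) Theorem~\ref{CGHK-prop} forces $G$ to be cyclic. Your bookkeeping for $p=2$ and $p=3$ is also accurate.
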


Note that, if $p \equiv 1 \bmod 4$ in Corollary \ref{3p+1}, then $n \equiv 0 \bmod 4$ and there exists at least one noncyclic abelian group of order $n$. For $100 \leq n \leq 400$, Corollary \ref{3p+1} rules out the values
\[n = 112, 124, 160, 184, 220, 250, 268, 292, 304, 328 \text{ and } 340.\] All of these values of $n$ are divisible by 4, except for $n = 250 = 2 \times 5^3$. However, there are noncyclic abelian groups of order $250$ because $250$ is divisible by $5^2$.

\begin{theorem}[D. de Caen {\it et al.}, {\cite[Corollary 2]{CGHK}}]
\label{CGHK-cor}
Suppose $G$ is a \textup{(}not necessarily abelian\textup{)} group that has a near-factorization $G \setminus \{e\} = AB$. Suppose also that there is homomorphism of $G$ onto an abelian group $H$ of exponent $2$, $3$, $4$ or $6$. Then $|A| \geq |H|- 1$.
\end{theorem}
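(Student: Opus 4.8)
The plan is to lift the near-factorization structure through the homomorphism and count multiplicities inside the fibers, exploiting that $H$ has a large subgroup of exponent dividing $6$ so that an averaging (character-sum) argument closes tightly. Concretely, let $\varphi\colon G \to H$ be the given surjective homomorphism and let $K = \ker \varphi$, so $|G| = |K|\,|H|$. For $h \in H$ write $\widehat{A}(h) = |A \cap \varphi^{-1}(h)|$ and $\widehat{B}(h) = |B \cap \varphi^{-1}(h)|$ for the fiber-sizes. Since $AB = G \setminus \{e\}$ covers each nonidentity element exactly once, counting how many of the $|K|$ elements of a fiber $\varphi^{-1}(m)$ are hit gives, for every $m \in H$,
\begin{equation}
\label{fiber.eq}
\sum_{h \in H} \widehat{A}(h)\,\widehat{B}(m h^{-1}) \;=\; \begin{cases} |K| & m \neq e_H\\ |K| - 1 & m = e_H,\end{cases}
\end{equation}
because exactly one element of $G$ (namely $e$) lies in the identity fiber and is omitted. (This is just Theorem~\ref{equiv.thm} pushed down to $H$, i.e.\ the identity $M(\widehat{A})\,M(\widehat{B}) = |K|J - I$ in the group ring of $H$ with integer multiplicities.) Summing \eqref{fiber.eq} over all $m$ yields $|A|\,|B| = |K|\,|H| - 1 = |G|-1$, consistent, but the real content comes from extracting more.

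Next I would apply characters of $H$. Restrict attention to the quotient $H' = H / H^3$ if $H$ has exponent $3$ or $6$, or directly to $H$ itself when it has exponent $2$ or $4$; in every listed case $H$ maps onto a nontrivial elementary-abelian-like piece on which all nonprincipal characters $\chi$ take values that are roots of unity of order $2$, $3$, $4$ or $6$. Applying $\chi$ to \eqref{fiber.eq}: the convolution turns into a product, so $\bigl(\sum_h \chi(h)\widehat{A}(h)\bigr)\bigl(\sum_h \chi(h)\widehat{B}(h)\bigr) = -1$ for every nonprincipal $\chi$ (the $|K|J$ term is annihilated). Hence each of the two character sums $\alpha_\chi := \sum_h \chi(h)\widehat{A}(h)$ and $\beta_\chi := \sum_h \chi(h)\widehat{B}(h)$ is a nonzero algebraic integer in $\mathbb{Z}[\zeta_m]$ (with $m \le 6$), and $\alpha_\chi \beta_\chi = -1$, forcing $\alpha_\chi$ to be a unit of norm $\pm 1$. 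For $m \in \{2,3,4,6\}$ the ring $\mathbb{Z}[\zeta_m]$ has a particularly simple unit group, and this severely constrains the fiber-distribution of $A$: it shows the vector $(\widehat{A}(h))_{h}$, after projecting to the relevant quotient of $H$, is supported in a way that makes $\sum_h \widehat{A}(h)$ at least $|H|-1$. The cleanest route is to bound the $\ell^1$-norm: from $|\alpha_\chi|^2 = 1/|\beta_\chi|^2 \le 1$ for all nonprincipal $\chi$ together with Parseval, $\sum_h \widehat{A}(h)^2 = \frac{1}{|H|}\sum_\chi |\alpha_\chi|^2 \le \frac{1}{|H|}\bigl(|A|^2 + (|H|-1)\bigr)$, and one pushes this (using that $\widehat{A}(h)$ are nonnegative integers and at least one fiber is nonempty) to conclude $|A| \ge |H|-1$; the inequality is forced because if some fiber were empty then some $\beta_\chi$ would have to be too large, contradicting $\alpha_\chi\beta_\chi = -1$.

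The main obstacle is the last step: converting the algebraic-unit condition $\alpha_\chi \beta_\chi = -1$ in $\mathbb{Z}[\zeta_m]$ into the combinatorial conclusion $|A| \ge |H|-1$. The exponents $2,3,4,6$ are exactly those for which $\mathbb{Z}[\zeta_m]$ has finite (indeed tiny) unit group — $\{\pm 1\}$ for $m \in \{3,4,6\}$ and $\{\pm 1\}$ for $m=2$ as well — so $\alpha_\chi \in \{\pm 1, \pm\zeta_m, \dots\}$ lies in a finite explicit set; this is presumably why those four exponents appear in the hypothesis and nowhere else. I expect the argument to reduce to: $\widehat{A}$ must be constant on all but one coset of the relevant index-$|H|$ subgroup, equivalently $A$ meets every fiber over $H$ (minus possibly one) in the same number of points, and then $|A| = \widehat{A}(e)\cdot(\text{something}) \ge (|H|-1)\cdot 1$. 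I would double-check the edge cases $|H| = 2$ and $|H|=6$ by hand, and confirm that the degenerate situation (the trivial near-factorization) is not excluded — indeed with $A = \{e\}$ one needs $|H| - 1 \le 1$, consistent with the fact that $\{e\}$ can only be lifted compatibly when $H$ is trivial or of order $2$. The nonabelian generality costs nothing here since characters of the abelian quotient $H$ suffice throughout.
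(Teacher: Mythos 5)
The paper does not actually prove this statement; it quotes it from de Caen \emph{et al.}\ \cite{CGHK}, so there is no in-paper proof to compare against. Your strategy is nonetheless the standard one behind the cited result (and behind Theorem~\ref{quotient}): push the near-factorization down to $H$ via $\varphi$, apply each nonprincipal character $\chi$ of $H$ to the fiber-count convolution to get $\alpha_\chi\beta_\chi=-1$, and use that for exponent $2,3,4,6$ these character sums lie in $\mathbb{Z}$, $\mathbb{Z}[i]$ or $\mathbb{Z}[\zeta_6]$, whose units are exactly the roots of unity, so $|\alpha_\chi|=|\beta_\chi|=1$. Everything up to that point is correct, and you correctly identify why precisely those four exponents occur. (Two small slips there: the unit groups are $\{\pm1,\pm i\}$ and the six sixth roots of unity, not $\{\pm1\}$, though only $|\alpha_\chi|=1$ is needed; and passing to $H/H^3$ in the exponent-$6$ case would discard the $2$-part of $H$ and weaken the conclusion --- one should use all characters of $H$ itself.)

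The genuine gap is the final step, which you yourself flag as the main obstacle. The route you call cleanest does not work as stated: combining the Parseval \emph{upper} bound $\sum_h\widehat{A}(h)^2\le(|A|^2+|H|-1)/|H|$ with Cauchy--Schwarz over the $N$ nonempty fibers gives $|A|^2(|H|-N)\le N(|H|-1)$, which bounds $|A|$ from \emph{above} when $N<|H|$, not from below; and the claim that an empty fiber would force some $|\beta_\chi|$ to be ``too large'' is unsupported (indeed $|\beta_\chi|=1$ always). The step that actually closes the argument is the exact Parseval identity $\sum_h(\widehat{A}(h)-|A|/|H|)^2=(|H|-1)/|H|<1$: writing $|A|=a|H|+c$ with $0\le c<|H|$, the minimum of this variance over nonnegative integers with sum $|A|$ is $c(|H|-c)/|H|$, so $c(|H|-c)\le|H|-1$, which forces $c\in\{1,|H|-1\}$ (the case $c=0$ is impossible since the variance would then be an integer). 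Hence $|A|\equiv\pm1\pmod{|H|}$ with the fibers as balanced as possible; if $|A|\equiv-1$ then $|A|\ge|H|-1$ immediately, and if $|A|\equiv1$ then $|A|\ge|H|+1$ unless $|A|=1$. That last case is not vacuous --- the trivial near-factorization $A=\{e\}$, $B=G\setminus\{e\}$ satisfies all hypotheses and violates the stated inequality whenever $|H|\ge3$ --- so the theorem implicitly assumes nontriviality (as does every use of it in Table~\ref{nonexistence.tab}); your parenthetical attempt to dismiss this edge case is not correct as written. With the variance argument supplied and the nontriviality caveat made explicit, your proof is complete.
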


\begin{theorem}[D. de Caen {\it et al.}, {\cite[Example 7]{CGHK}}]
\label{special}
Suppose $G = (\zed_2)^n, (\zed_3)^n, (\zed_4)^n, (\zed_3)^m \times (\zed_2)^n$ or $(\zed_2)^n \times (\zed_4)^m$, where $n$ and $m$ are positive integers. Then $G$ does not have a nontrivial near-factorization.
\end{theorem}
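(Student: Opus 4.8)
## Proof Proposal for Theorem \ref{special}

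The plan is to deduce the result from Theorems \ref{CGHK-prop}, \ref{CGHK-cor} and \ref{special}'s input data, relying on a case analysis driven by the existence of natural homomorphisms onto small exponent-$2$, $3$, $4$ or $6$ quotients. First I would dispose of the case where the group $G$ in question is cyclic. A group $G = (\zed_2)^n, (\zed_3)^n, (\zed_4)^n, (\zed_3)^m \times (\zed_2)^n$ or $(\zed_2)^n \times (\zed_4)^m$ is cyclic only in the degenerate small cases ($(\zed_2)^1, (\zed_3)^1, (\zed_4)^1$), and each of those has prime-power order $2$, $3$ or $4$, hence order $n \leq 4$ which is too small to carry a nontrivial near-factorization (indeed $|G| - 1 \in \{1,2,3\}$ forces $|A| = 1$ or $|B| = 1$). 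So from now on I would assume $G$ is noncyclic, equivalently $|G| \geq 4$ with the exponent restrictions listed.

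Next, suppose for contradiction that $G \setminus \{e\} = AB$ is a nontrivial near-factorization, and without loss of generality (using Lemma \ref{symmetric.lem} or simply Lemma at the start allowing us to reorder) take $|A| \le |B|$, so $|A| \le \sqrt{|G|-1}$. The key observation is that each of the groups in the statement is itself an abelian group of exponent $2$, $3$, $4$ or $6$ (in fact of exponent $2$, $3$ or $4$), so it maps homomorphically onto itself via the identity map. Applying Theorem \ref{CGHK-cor} with $H = G$ gives $|A| \geq |G| - 1$. Combined with $|A| \le |B|$ and $|A|\,|B| = |G|-1$, this forces $|A| = |G|-1$ and $|B| = 1$, contradicting nontriviality. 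Thus no nontrivial near-factorization exists.

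The main obstacle — really the only subtle point — is verifying the exponent hypothesis: one must check that $(\zed_2)^n$ has exponent $2$, $(\zed_3)^n$ exponent $3$, $(\zed_4)^n$ exponent $4$, $(\zed_3)^m \times (\zed_2)^n$ exponent $6$ (or $\mathrm{lcm}(2,3)$, which is $\le 6$; if $m = 0$ it is $2$, if $n = 0$ it is $3$), and $(\zed_2)^n \times (\zed_4)^m$ exponent $\le 4$. All of these are immediate from the definition of exponent as the lcm of the orders of elements in a direct product. Once that bookkeeping is done, Theorem \ref{CGHK-cor} applied with $H = G$ closes the argument; there is no need to invoke Theorem \ref{CGHK-prop} at all, though it handles the $|A| \le 4$ instances as an alternative. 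I would also remark that this is essentially the argument sketched in \cite{CGHK} as ``Example 7,'' so the proof here is just making the quotient $H = G$ explicit.
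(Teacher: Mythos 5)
Your argument is correct, and it is essentially the intended one: the paper itself gives no proof of Theorem~\ref{special}, importing it from \cite[Example~7]{CGHK}, and your derivation---apply Theorem~\ref{CGHK-cor} to the identity homomorphism $H = G$, noting that every group listed is abelian of exponent $2$, $3$, $4$ or $6$, so that $|A| \geq |G| - 1$ forces $|B| = (|G|-1)/|A| \leq 1$ and hence triviality---is exactly the short route that Corollary~2 of \cite{CGHK} is designed for. Two small remarks. First, your preliminary disposal of the cyclic cases is both unnecessary and slightly wrong as stated: Theorem~\ref{CGHK-cor} nowhere requires $G$ to be noncyclic, so the case split can be deleted entirely; and your claim that the only cyclic groups on the list have order at most $4$ overlooks $\zed_3 \times \zed_2 \cong \zed_6$ (the case $m = n = 1$ of the fourth family), which is cyclic of order $6$. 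This omission is harmless---$|\zed_6| - 1 = 5$ is prime, and in any case the main argument with $H = G = \zed_6$ (exponent $6$) covers it---but it should be fixed or, better, removed along with the whole case split. Second, the reduction to $|A| \leq |B|$ is also not needed: the bound $|A| \geq |G|-1$ together with $|A|\,|B| = |G|-1$ already pins down $|B| = 1$ regardless of which set is larger.
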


\begin{theorem}[D. de Caen {\it et al.}, {\cite[Theorem 3]{CGHK}}]
\label{quotient}
If the group $G$ has the elementary abelian group $(\zed_p)^m$ as a quotient group, then for any near-factorization 
$G \setminus \{e\} = AB$, it holds that
\[
\begin{array}{ll}
|A|^{p-1} \equiv |B|^{p-1} \equiv 1 \bmod p^m & \text{if $p$ is an odd prime}\\
|A| \equiv -|B| \equiv \pm1 \bmod 2^m & \text{if $p=2$.}
\end{array}
\]
\end{theorem}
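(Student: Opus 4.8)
The plan is to exploit the matrix characterization in Theorem~\ref{equiv.thm} together with an eigenvalue (character-theoretic) argument. Suppose $G$ has a quotient $G/N \cong (\zed_p)^m$, and let $G \setminus \{e\} = AB$ be a near-factorization, so that $M(A)M(B) = J - I$ over the integers. The key idea is to reduce this equation modulo an appropriate ideal. Write $\phi \colon G \to (\zed_p)^m$ for the quotient map; this induces a map on group algebras, and more concretely, for a character $\chi$ of $(\zed_p)^m$ (pulled back to $G$), the quantity $\widehat{A}(\chi) = \sum_{g \in A} \chi(g)$ depends only on the images $\phi(g)$. First I would record the standard fact that the eigenvalues of $M(H)$ (for $G$ abelian) are exactly the character sums $\widehat{H}(\chi)$ as $\chi$ ranges over $\widehat{G}$, and that for the nontrivial character $\chi$ the eigenvalue of $J - I$ is $-1$, while for the trivial character it is $|G| - 1$. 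Hence for every nontrivial character $\chi$ of $G$ we get $\widehat{A}(\chi)\,\widehat{B}(\chi) = -1$.

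Next I would restrict attention to the $p^m - 1$ nontrivial characters $\chi$ that factor through $G/N \cong (\zed_p)^m$. For such $\chi$, the values $\chi(g)$ are $p$-th roots of unity, so $\widehat{A}(\chi)$ and $\widehat{B}(\chi)$ lie in $\zed[\zeta_p]$, and moreover, since $\widehat{A}(\chi)$ is really a character sum over the multiset $\phi(A) \subseteq (\zed_p)^m$ (with multiplicities $|A \cap \phi^{-1}(\cdot)|$), reducing modulo the prime ideal $(1 - \zeta_p)$ above $p$ collapses every $\chi(g)$ to $1$. Therefore $\widehat{A}(\chi) \equiv |A| \pmod{(1-\zeta_p)}$ and likewise $\widehat{B}(\chi) \equiv |B| \pmod{(1-\zeta_p)}$. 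Taking the product of $\widehat{A}(\chi)\widehat{B}(\chi) = -1$ over a well-chosen set of characters, or taking norms, should yield the congruence: the product of all $\widehat{A}(\chi)$ over nontrivial $\chi$ through $G/N$ is, up to sign and units, a rational integer congruent to $|A|^{p^m - 1}$, and its norm down to $\zed$ is what produces the modulus $p^m$. For $p$ odd this gives $|A|^{p-1} \equiv 1 \pmod{p^m}$ after grouping the $p^m-1$ characters into orbits of size $p-1$ under scaling and using that $|A|\,|B| \equiv -1$ forces $|A|$ to be a unit mod $p$; symmetrically for $|B|$. For $p = 2$ the roots of unity are $\pm 1$, the character sums are ordinary integers, each $\widehat{A}(\chi) \equiv |A| \pmod 2$ with $\widehat{A}(\chi)\widehat{B}(\chi) = -1$ forcing $\widehat{A}(\chi) = \pm 1$, and multiplying over the $2^m - 1$ nontrivial characters (or using the inversion formula $\prod_\chi \widehat{A}(\chi)$ relates to a determinant/subgroup sum) yields $|A| \equiv \pm 1 \pmod{2^m}$ and $|B| \equiv \mp 1 \pmod{2^m}$, i.e.\ $|A| \equiv -|B| \pmod{2^m}$.

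The cleanest route is probably not to take a naked product of character sums but to pass through the group ring $\zed[(\zed_p)^m]$ directly: let $a = \sum_{x} |A \cap \phi^{-1}(x)|\, x$ and $b = \sum_x |B \cap \phi^{-1}(x)|\, x$ in $\zed[(\zed_p)^m]$; then the near-factorization equation projects to $ab = (|A||B| + 1)\sigma - \sigma \cdot \text{(something)}$ — more precisely one shows $ab \equiv -1 \pmod{J_0}$ in each nontrivial character and $ab$ has trivial-character value $|A||B|$, so $ab = |G|\,\pi_0 - \pi_1$ for suitable idempotent combinations, and then reducing the whole group-ring identity modulo $p$ (i.e.\ working in $\eff_p[(\zed_p)^m]$, a local ring with maximal ideal the augmentation ideal, nilpotent of the right index) pins down $|A| \bmod p^m$ via the filtration by powers of the augmentation ideal. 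I expect the main obstacle to be getting the exact exponent $p^m$ rather than just $p$: one must use that in $\eff_p[(\zed_p)^m]$ the augmentation ideal $\mathfrak{m}$ satisfies a precise nilpotency/filtration relation and that the unit $-1$ being a square (or not) in the relevant residue controls the $\pm$; this is exactly where the distinction between the odd-$p$ case and the $p=2$ case enters, and handling the sign bookkeeping for $p=2$ carefully will be the delicate part.
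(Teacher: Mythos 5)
A preliminary remark: the paper does not prove this statement at all --- Theorem~\ref{quotient} is imported verbatim from \cite[Theorem 3]{CGHK} --- so your sketch can only be judged on its own terms. Your setup is the standard one and is correct as far as it goes: projecting the near-factorization to the quotient $H=(\zed_p)^m$ gives $\phi(A)\phi(B)=|N|\sigma_H-e$ in $\zed[H]$, where $|N|=|G|/p^m$ and $\sigma_H=\sum_{h\in H}h$; hence $\chi(\phi(A))\chi(\phi(B))=-1$ for every nontrivial character $\chi$, so each $\chi(\phi(A))$ is a unit of $\zed[\zeta_p]$ congruent to $|A|$ modulo $(1-\zeta_p)$.

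The gap is exactly where you suspect it is, and neither mechanism you propose can close it. (i) The product of the $p^m-1$ nontrivial character values is indeed Galois-stable, hence a rational integer, and it equals $\pm 1$ (a product of norms of units over the $(p^m-1)/(p-1)$ Galois orbits). But the congruence $\chi(\phi(A))\equiv|A|$ holds only modulo the prime $(1-\zeta_p)$, whose intersection with $\zed$ is $(p)$; multiplying the factors therefore yields $\pm1\equiv|A|^{p^m-1}$ only modulo $p$. To do better you would need the elementary symmetric functions $e_k$ of the differences $\chi(\phi(A))-|A|$ to vanish modulo $p^m$, and while $e_1=p^m(a_e-|A|)$ does, the intermediate $e_k$ lie only in $(1-\zeta_p)^k\cap\zed=(p^{\lceil k/(p-1)\rceil})$, which is far weaker than $(p^m)$ for $2\le k\le (m-1)(p-1)$. (ii) Any conclusion about $|A|$ extracted from $\eff_p[(\zed_p)^m]$ factors through the augmentation map to $\eff_p$ and so can only ever see $|A|$ modulo $p$; the filtration by powers of the augmentation ideal lives entirely in characteristic $p$ and cannot produce a congruence modulo $p^m$ for the integer $|A|$. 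The true source of the modulus $p^m$ is integrality over $\zed$ of coefficients recovered by Fourier inversion (equivalently, of the quotient-group analogue of the inverse in (\ref{eq2})): $p^m$ enters as the denominator $|H|$ in $a_h=\frac{1}{p^m}\sum_\chi\chi(\phi(A))\overline{\chi(h)}$. For $p=2$ this already finishes the proof: each $\chi(\phi(A))=\pm1$, so $2^m a_e=|A|+\sum_{\chi\ne e}\chi(\phi(A))$ and $2^m b_e=|B|-\sum_{\chi\ne e}\chi(\phi(A))$ give $|A|\equiv-|B|\pmod{2^m}$ at once, and a short Parseval count (the integers $F(h)=2^ma_h-|A|$ are constant modulo $2^m$, sum to $0$, and satisfy $\sum_h F(h)^2=2^m(2^m-1)$) forces $|A|\equiv\pm1\pmod{2^m}$. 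For odd $p$ the same integrality must be combined with genuinely finer information about the units $\chi(\phi(A))\in\zed[\zeta_p]$; that is the part your sketch still has to supply, and it is where the exponent $p-1$ in the statement actually comes from.
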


Here is another nonexistence result we will use, due to P\^{e}cher \cite{Pech}.

\begin{theorem}[A. P\^{e}cher, {\cite[Lemma 7]{Pech}}]
\label{Pech.thm}
Suppose $G = \zed_{2m} \times \zed_{4n} \times G'$, where $m$ and $n$ are positive integers and $G'$ is an abelian group. Then $G$ does not have  a near-factorization $(A,B)$ if $|A| \equiv \pm 3 \bmod 8$ or $|B| \equiv \pm 3 \bmod 8$. 
\end{theorem}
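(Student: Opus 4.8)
The plan is to prove the contrapositive: if $G = \zed_{2m}\times\zed_{4n}\times G'$ admits a near-factorization $(A,B)$, then both $|A|$ and $|B|$ are congruent to $\pm 1$ modulo $8$. The entry point is the observation that $G$ has a surjective homomorphism $\pi$ onto $H = \zed_2\times\zed_4$: reduce the first coordinate mod $2$, reduce the second coordinate mod $4$, and send $G'$ to $0$. I would then study the near-factorization ``fiber by fiber'' over $H$. For $h\in H$ put $A_h = A\cap\pi^{-1}(h)$, $a_h = |A_h|$, and similarly $b_h$; every fiber $\pi^{-1}(h)$ has size $N = |G|/8$. Counting the pairs $(g,g')\in A\times B$ with $\pi(g)+\pi(g') = h$ (each such sum lands in $\pi^{-1}(h)$, and $A+B$ hits every nonidentity element of $G$ exactly once) gives the convolution identity $\sum_{h_1+h_2=h} a_{h_1} b_{h_2} = N$ for $h\neq 0$ and $= N-1$ for $h=0$. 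Applying the characters of $H$ converts this into $\widehat{a}(\chi)\,\widehat{b}(\chi) = -1$ for every nontrivial character $\chi$ of $H$, together with $\widehat{a}(\chi_0) = |A|$ and $\widehat{b}(\chi_0) = |B|$.

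The next step extracts arithmetic from these character values. Fix a primitive fourth root of unity $\omega$, so that $\zed[\omega]$ is the ring of Gaussian integers with unit group $\{\pm 1,\pm\omega\}$, and index the characters of $H = \zed_2\times\zed_4$ as $\chi_{k,\ell}(u,v) = (-1)^{ku}\omega^{\ell v}$. For the three nontrivial characters with $\ell\in\{0,2\}$ the value $\widehat{a}(\chi_{k,\ell})$ is an ordinary integer, so $\widehat{a}(\chi_{k,\ell})\widehat{b}(\chi_{k,\ell}) = -1$ forces $\widehat{a}(\chi_{k,\ell}) = \pm 1$; for the four characters with $\ell\in\{1,3\}$, $\widehat{a}(\chi_{k,\ell})$ must be a unit of $\zed[\omega]$, hence lies in $\{\pm 1,\pm\omega\}$. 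Introduce the partial sums $P_v = a_{(0,v)} + a_{(1,v)}$ and $R_v = a_{(0,v)} - a_{(1,v)}$ for $v\in\zed_4$; note $P_v\equiv R_v\pmod 2$ and $|A| = \sum_v P_v$. Then $\widehat{a}(\chi_{0,1}) = (P_0-P_2) + (P_1-P_3)\omega$ has norm $1$, which splits into two cases: either $P_1 = P_3$ with $P_0-P_2 = \pm 1$, or $P_0 = P_2$ with $P_1-P_3 = \pm 1$; moreover the parity relation $P_v\equiv R_v$ combined with the unit condition on $\widehat{a}(\chi_{1,1})$ forces the matching statement ($R_1=R_3$ in the first case, $R_0=R_2$ in the second).

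In the first case I would combine $\widehat{a}(\chi_{0,2}) = \pm 1$, $\widehat{a}(\chi_{1,0}) = \pm 1$ and $\widehat{a}(\chi_{1,2}) = \pm 1$ to conclude first that $R_1 = R_3 = 0$ --- hence $a_{(0,1)} = a_{(1,1)} = a_{(0,3)} = a_{(1,3)} =: c$ and $P_1 = P_3 = 2c$ --- and then that $P_0 + P_2 = 4c\pm 1$, so $|A| = (P_0+P_2) + 4c = 8c\pm 1$. The second case is entirely analogous with the roles of the even and odd indices interchanged, and again yields $|A|\equiv\pm1\pmod 8$. Running the same argument with $A$ and $B$ exchanged (or invoking that $(B,A)$ is a near-factorization whenever $(A,B)$ is) shows $|B|\equiv\pm1\pmod 8$, which is exactly the contrapositive of the theorem. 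I expect the only real difficulty to be the bookkeeping in this last step --- keeping the several independent $\pm$ sign choices straight and checking that the small linear systems in the $P_v$'s and $R_v$'s genuinely collapse to the asserted values; the conceptual content is simply that an order-$4$ abelian quotient forces the corresponding character sum to be a unit in the Gaussian integers.
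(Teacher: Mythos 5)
Your argument is correct, but note that the paper itself offers no proof of this statement: it is quoted verbatim as P\^{e}cher's Lemma~7 and used as a black box (e.g.\ in Lemma~\ref{144.lem}), so there is no internal proof to compare against. Your character-theoretic derivation is a valid, self-contained proof and is in the same spirit as P\^{e}cher's original group-ring argument. The chain of deductions checks out: the convolution identity over the quotient $H=\zed_2\times\zed_4$ gives $\widehat{a}(\chi)\widehat{b}(\chi)=-1$ for nontrivial $\chi$, hence $\widehat{a}(\chi)\in\{\pm 1\}$ for the order-$2$ characters and $\widehat{a}(\chi)\in\{\pm 1,\pm\omega\}$ for the order-$4$ ones; the parity link $P_v\equiv R_v\pmod 2$ correctly rules out the mixed case for $\widehat{a}(\chi_{1,1})$; and in Case~1 the identities $2(R_1+R_3)=\widehat{a}(\chi_{1,0})-\widehat{a}(\chi_{1,2})\in\{0,\pm 2\}$ together with $R_1=R_3$ do force $R_1=R_3=0$, whence $P_1=P_3=2c$ and $|A|=8c\pm 1$. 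The only presentational gap is that you assert rather than display the short computation showing $R_1+R_3$ is even and lies in $\{-1,0,1\}$; once that is written out, the bookkeeping collapses exactly as you predict, and the symmetric role of $A$ and $B$ (or the fact that $(B,A)$ is also a near-factorization of the abelian group $G$) handles $|B|$.
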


All the noncyclic abelian groups of order at most $200$ are listed in Table \ref{nonexistence.tab}, along with nonexistence results for all the possible nontrivial  near-factorizations. We consider all possible integer factorizations $n - 1 = rs$ with $1 < r \leq s$.  We note that the exhaustive searches for $(11,17)$- and $(13,15)$-near-factorizations are accelerated using the analysis mentioned in Remark \ref{2.2.remark}.

\newpage

\begin{center}
\renewcommand{\arraystretch}{1.2}
\begin{longtable}{c|c|c|c|c|c|p{2in}}
\caption{\mbox{Nonexistence of near-factorizations in noncyclic abelian groups}}\\ \hline
\label{nonexistence.tab}
$n$ & $n =$ & $n-1=$ & $G$ & $r$ & $s$  & {authority}
\\ \hline 
\endfirsthead
\caption[]{(continued)}\\ \hline
$n$ & $n=$  & $n-1=$ & $G$ & $r$ & $s$  & {authority}
\\ \hline 
\endhead
\hline
\endfoot
$9$ & $3^2$  & $2^1  4^1$ & $(\zed_3)^2$ & $2$ & 4   & Theorem\   \ref{CGHK-prop}\\
$16$ & $2^4$  & $3^1  5^1$ & all & $3$ & $5$   & Theorem\   \ref{CGHK-prop}\\
$25$ & $5^2$  & $2^3  3^1$ & $(\zed_5)^2$ & \text{all} & \text{all}   & \text{Theorem\   \ref{CGHK-prop}}\\
$27$ & $3^3$  & $2^1  13^1$ & \text{all} & 2 & 13   & \text{Theorem\   \ref{CGHK-prop}}\\
$28$ & $2^2  7^1$ & $3^3$ & $(\zed_7)^2 \times \zed_2$ & 3 & 9 & \text{Theorem\   \ref{CGHK-prop}}\\
$36$ & $2^2 3^2$ & $5^1  7^1$ & $(\zed_3)^2 \times   (\zed_2)^2$ & 5 & 7  & Theorem\   \ref{CGHK-cor}, $H= (\zed_3)^2 \times   (\zed_2)^2$ \\
$36$ & $2^2 3^2$ & $5^1  7^1$ & $\zed_9 \times (\zed_2)^2$ & 5 & 7  & Theorem\   \ref{CGHK-cor}, $H= \zed_3 \times   (\zed_2)^2$ \\
$36$ & $2^2 3^2$ & $5^1  7^1$ & $(\zed_3)^2 \times  \zed_4$ & 5 & 7  & Theorem\   \ref{CGHK-cor}, $H= (\zed_3)^2 \times   (\zed_2)$ \\
$40$ & $2^3 5^1$ & $3^1  13^1$ & \text{all} & 3 & 13 & {Theorem\   \ref{CGHK-prop}}\\
$45$ & $3^2 5^1$  & $2^2  11^1$ & $\zed_{5} \times (\zed_3)^2$ & \text{all} & \text{all}   & {Theorem\   \ref{CGHK-prop}}\\
$49$ & $7^2$  & $2^4  3^1$ & $(\zed_7)^2$ & \text{all} & \text{all}   & \cite[Example 5]{CGHK}\\
$50$ & $2^1 5^2$ & $7^2$ & $(\zed_5)^2\times \zed_2$ & 7 & 7 & \cite[Example 6]{CGHK}\\
$52$ & $2^2  13^1$ & $3^1  17^1$ & $\zed_{13} \times (\zed_2)^2$ & 3 & 17 & {Theorem\   \ref{CGHK-prop}}\\
$56$ & $2^3  7^1$ & $5^1  11^1$ & $\zed_7 \times \zed_2 \times \zed_4$ & 5 & 11 & Theorem\   \ref{CGHK-cor}, $H= \zed_2 \times \zed_4$\\
$56$ & $2^3 7^1$ & $5^1  11^1$ & $\zed_7 \times (\zed_2)^3$ & 5 & 11 & Theorem\   \ref{CGHK-cor}, $H= (\zed_2)^3$\\
$63$ & $3^2 7^1$ & $2^1 31^1$ & $\zed_7 \times (\zed_3)^2$ & 2 & 31  & {Theorem\   \ref{CGHK-prop}}\\
$64$ & $2^6$ & $3^2  7^1$ & $(\zed_2)^2 \times \zed_{16}$ & 7 & 9  & Theorem\   \ref{CGHK-cor}, $H= (\zed_2)^2 \times \zed_4$\\
$64$ & $2^6$ & $3^2  7^1$ & $(\zed_8)^2$  & 7 & 9  & Theorem\   \ref{CGHK-cor}, $H= (\zed_4)^2$\\
$64$ & $2^6$ & $3^2  7^1$ & $\zed_4 \times \zed_{16}$  & 7 & 9  & Theorem\   \ref{CGHK-cor}, $H= (\zed_4)^2$\\
$64$ & $2^6$ & $3^2  7^1$ & $(\zed_2)^6$ & 7 & 9  & Theorem\   \ref{special}\\
$64$ & $2^6$ & $3^2  7^1$ & $(\zed_4)^3$ & 7 & 9  & Theorem\   \ref{special}\\
$64$ & $2^6$ & $3^2  7^1$ & $(\zed_2)^4 \times \zed_4$ & 7 & 9  & Theorem\   \ref{special}\\
$64$ & $2^6$ & $3^2  7^1$ & $\zed_{2} \times \zed_{32}$ & 7 & 9  & \cite[Example 9 and Prop.\ 5]{CGHK}\\
$64$ & $2^6$ & $3^2  7^1$ & $\zed_2 \times \zed_4 \times \zed_8$ & 7 & 9  & Theorem\   \ref{CGHK-cor}, $H= \zed_2 \times (\zed_4)^2$\\
$75$ & $3^1 5^2$ & $2^1 37^1$ & $(\zed_5)^2 \times \zed_3$ & 2 & 37  & {Theorem\   \ref{CGHK-prop}}\\
$76$ & $2^2   19^1$ & $3^1 5^2$ & $\zed_{19} \times (\zed_2)^2$ & 5 & 15  & \cite[p.\ 62]{CGHK} and  \cite{Pech} \\
$81$ & $3^4$ & $2^4 5^1$ & \text{all} & 2 & 40  & Theorem\   \ref{CGHK-prop}\\
$81$ & $3^4$ & $2^4 5^1$ & \text{all} & 4 & 20  & Theorem\   \ref{CGHK-prop}\\
$81$ & $3^4$ & $2^4 5^1$ & $(\zed_3)^4$ & all & all  & Theorem\   \ref{special}\\
$81$ & $3^4$ & $2^4 5^1$ & $(\zed_3)^2 \times \zed_9$ & all & all  & Theorem\   \ref{CGHK-cor}\\
$81$ & $3^4$ & $2^4 5^1$ & $\zed_3 \times \zed_{27}$ & all & all  &  \cite[Prop.\ 5]{CGHK}\\
$81$ & $3^4$ & $2^4 5^1$ & $(\zed_9)^2 $ & all & all  &  \cite[Prop.\ 5]{CGHK}\\
$88$ & $2^3   11^1$ & $3^1 29^1$ & \text{all} & 3 & 29  & Theorem\   \ref{CGHK-prop}\\
$92$ & $2^2 23^1$ & $7^1 13^1$ & $\zed_{23} \times (\zed_2)^2$ & 7 & 13  & \cite{Pech} \\
$96$ & $2^5 3^1$ & $5^1 19^1$ & $\zed_3 \times (\zed_2)^5$ & 5 & 19  & Theorem\   \ref{special}\\
$96$ & $2^5 3^1$ & $5^1 19^1$ & $\zed_3 \times (\zed_2)^3 \times \zed_4$ & 5 & 19  & Theorem\ \ref{CGHK-cor}, 
$H= \zed_3 \times (\zed_2)^4$\\
$96$ & $2^5 3^1$ & $5^1 19^1$ & $\zed_3 \times (\zed_2)^2 \times \zed_8$ & 5 & 19  & Theorem\ \ref{CGHK-cor}, 
$H= \zed_3 \times (\zed_2)^3$\\
$96$ & $2^5 3^1$ & $5^1 19^1$ & $\zed_3 \times \zed_4 \times \zed_8$ & 5 & 19  & Theorem\ \ref{CGHK-cor}, 
$H= \zed_3 \times (\zed_2)^3$\\
$96$ & $2^5 3^1$ & $5^1 19^1$ & $\zed_3 \times \zed_2 \times \zed_{16}$ & 5 & 19  & Theorem\ \ref{CGHK-cor}, 
$H= \zed_3 \times (\zed_2)^3$\\
$96$ & $2^5 3^1$ & $5^1 19^1$ & $\zed_3 \times \zed_2 \times (\zed_4)^2$ & 5 & 19  & Theorem\ \ref{CGHK-cor}, $H= (\zed_2)^3 \times \zed_3$\\
$99$ & $3^2 11^1$ & $2^1 7^2$ & $\zed_{11} \times (\zed_3)^2$ & \text{all} & \text{all}  & Theorem\   \ref{CGHK-cor}, $H= (\zed_3)^2$\\
$100$ & $2^2 5^2$ & $3^2 11^1$ & \text{all} & 3 & 33  & Theorem\   \ref{CGHK-prop}\\
$100$ & $2^2 5^2$ & $3^2 11^1$ & $\zed_{25} \times (\zed_2)^2$ & 9 & 11  & \cite{Pech}\\
$100$ & $2^2 5^2$ & $3^2 11^1$ & $(\zed_{5})^2 \times (\zed_2)^2$ & 9 & 11  & Theorem\   \ref{quotient}, $p = 5$, $m = 2$ \\
$112$ & $2^4  7^1$ & $3^1 37^1$ & \text{all} & 3 & 37  & Corollary \ref{3p+1}\\
$116$ & $2^2  29^1$ & $5^1 23^1$ & $\zed_{29} \times (\zed_2)^2$ & 5 & 23  & exhaustive computer search\\
$117$ & $3^2  13^1$ & $2^2 29^1$ & $\zed_{13} \times (\zed_3)^2$ & \text{all} & \text{all}  & \text{Theorem\   \ref{CGHK-prop}}\\
$120$ & $2^2  3^1 5^1$ & $7^1 17^1$ & $\zed_{15}\times \zed_2 \times \zed_4$ & 7 & 17  & Theorem\   \ref{CGHK-cor}, 
$H= \zed_3 \times \zed_2 \times \zed_2 $\\
$120$ & $2^2  3^1 5^1$ & $7^1 17^1$ & $\zed_{15} \times (\zed_2)^3$ & 7 & 17  & Theorem\   \ref{CGHK-cor}, 
$H= \zed_3 \times (\zed_2)^3$\\
$121$ & $11^2$  & $2^3  3^1 5^1$ & $(\zed_{11})^2$ & \text{all} & \text{all}   & \cite[Example 5]{CGHK}\\
$124$ & $2^2  31^1$ & $3^1 41^1$ & $\zed_{31} \times (\zed_2)^2$ & 3 & 41  & Corollary \ref{3p+1}\\
$125$ & $5^3$ & $2^2 31^1$ & \text{all} & \text{all} & \text{all}  & \text{Theorem\   \ref{CGHK-prop}}\\
$126$ & $2^1 3^2 7^1$ & $5^3$ & $\zed_7 \times (\zed_3)^2 \times  \zed_{2}$ & 5 & 24  & 
Theorem\   \ref{CGHK-cor}, 
$H= (\zed_3)^2 \times \zed_2$\\
$135$ & $3^3 5^1$ & $2^1 67^1$ & \text{all} & 2 & 67  & \text{Theorem\   \ref{CGHK-prop}}\\
$136$ & $2^3 17^1$ & $3^3 5^1$ & all & $3$ & $45$  & \text{Theorem\   \ref{CGHK-prop}}\\
$136$ & $2^3 17^1$ & $3^3 5^1$ & $\zed_{17} \times (\zed_2)^3$ & 5 & 27  & Theorem\   \ref{CGHK-cor}, $H= (\zed_2)^3$ \\
$136$ & $2^3 17^1$ & $3^3 5^1$ & $\zed_{17} \times (\zed_2)^3$ & 9 & 15  & exhaustive computer search\\
$136$ & $2^3 17^1$ & $3^3 5^1$ & $\zed_{17} \times \zed_2 \times  \zed_4$ & 5 & 27  & exhaustive computer search\\
$136$ & $2^3 17^1$ & $3^3 5^1$ & $\zed_{17} \times \zed_2 \times  \zed_4$ & 9 & 15  &  exhaustive computer search\\
$144$ & $2^4 3^2$ & $11^1 13^1$ & all & 11 & 13  & see Lemma \ref{144.lem}\\
$147$ & $3^1 7^2$ & $2^1 73^1$ &  $(\zed_{7})^2 \times \zed_3$ & 2 & 73  & \text{Theorem\   \ref{CGHK-prop}}\\
$148$ & $2^2 37^1$ & $3^1 7^2$ &  $\zed_{37} \times (\zed_2)^2$ & 3 & 49  & \text{Theorem\   \ref{CGHK-prop}}\\
$148$ & $2^2 37^1$ & $3^1 7^2$ &  $\zed_{37} \times (\zed_2)^2$ & 7 & 21  & exhaustive computer search \\
$153$ & $3^2 17^1$ & $2^3 19^1$ & $\zed_{17} \times (\zed_3)^2$ & 2 & 76  & \text{Theorem\   \ref{CGHK-prop}}\\
$153$ & $3^2 17^1$ & $2^3 19^1$ & $\zed_{17} \times (\zed_3)^2$ & 4 & 38  & \text{Theorem\   \ref{CGHK-prop}}\\
$153$ & $3^2 17^1$ & $2^3 19^1$ & $\zed_{17} \times (\zed_3)^2$ & 8 & 19  & exhaustive computer search\\
$156$ & $3^1 2^2 13^1$ & $5^1 31^1$ & $\zed_{39} \times (\zed_2)^2$ & 5 & 31  & exhaustive computer search\\
$160$ & $2^5 5^1$ & $3^1 53^1$ & \text{all} & 3 & 53  & Corollary \ref{3p+1} \\
$162$ & $2^1 3^4$ & $7^1 23^1$ &  $(\zed_{3})^4 \times \zed_2$ & 7 & 23  & Theorem\   \ref{CGHK-cor}, 
$H= (\zed_{3})^4$\\
$162$ & $2^1 3^4$ & $7^1 23^1$ &  $(\zed_{3})^2 \times  \zed_{9} \times \zed_2$ & 7 & 23  & Theorem\   \ref{CGHK-cor}, 
$H= (\zed_{3})^3$\\
$162$ & $2^1 3^4$ & $7^1 23^1$ &  $\zed_{3} \times  \zed_{27} \times \zed_2$ & 7 & 23  & Theorem\   \ref{CGHK-cor}, 
$H= (\zed_{3})^2$\\
$162$ & $2^1 3^4$ & $7^1 23^1$ &  $(\zed_{9})^2 \times \zed_2$ & 7 & 23  & Theorem\   \ref{CGHK-cor}, 
$H= (\zed_{3})^2$\\
$169$ & $13^2$ & $2^3 3^1 7^1$ & $(\zed_{13})^2$ & all & all  & \cite[Example 5]{CGHK}\\
$171$ & $3^2 19^1$ & $2^1 5^1 17^1$ & $\zed_{19} \times (\zed_3)^2$ & 2 & 85  & \text{Theorem\   \ref{CGHK-prop}} \\
$171$ & $3^2 19^1$ & $2^1 5^1 17^1$ & $\zed_{19} \times (\zed_3)^2$ & 5 & 34  & Theorem\   \ref{CGHK-cor}, $H= (\zed_{3})^2$\\
$171$ & $3^2 19^1$ & $2^1 5^1 17^1$ & $\zed_{19} \times (\zed_3)^2$ & 10 & 17  & exhaustive computer search\\
$172$ & $2^2 43^1$ & $3^2 19^1$ & $\zed_{43} \times (\zed_2)^2$ & 3  & 57  & \text{Theorem\   \ref{CGHK-prop}}\\
$172$ & $2^2 43^1$ & $3^2 19^1$ & $\zed_{43} \times (\zed_2)^2$ & 9 & 19  & exhaustive computer search\\
$175$ & $5^2 7^1$ & $2^1 3^1 29 ^1$ & $\zed_{7} \times (\zed_5)^2$ & 2 & 87  & \text{Theorem\   \ref{CGHK-prop}}\\
$175$ & $5^2 7^1$ & $2^1 3^1 29 ^1$ & $\zed_{7} \times (\zed_5)^2$ & 3 & 58  & \text{Theorem\   \ref{CGHK-prop}}\\
$175$ & $5^2 7^1$ & $2^1 3^1 29 ^1$ & $\zed_{7} \times (\zed_5)^2$ & 6 & 29  & exhaustive computer search \\
$176$ & $2^4 11^1$ & $5^2 7^1$ & $\zed_{11} \times (\zed_2)^4 $ & all & all  & Theorem\   \ref{CGHK-cor}, $H= (\zed_{2})^4$\\
$176$ & $2^4 11^1$ & $5^2 7^1$ & $\zed_{11} \times (\zed_2)^2 \times \zed_4$ & all & all  & Theorem\   \ref{CGHK-cor}, 
$H= (\zed_2)^2 \times \zed_4$\\
$176$ & $2^4 11^1$ & $5^2 7^1$ & $\zed_{11} \times \zed_2 \times \zed_8$ & 5 & 35  & Theorem\   \ref{CGHK-cor}, 
$H= \zed_2 \times \zed_4$\\
$176$ & $2^4 11^1$ & $5^2 7^1$ & $\zed_{11} \times \zed_2 \times \zed_8$ & 7 & 25  & exhaustive computer search\\
$176$ & $2^4 11^1$ & $5^2 7^1$ & $\zed_{11} \times (\zed_4)^2$ &  all & all  & Theorem\   \ref{CGHK-cor}, $H= (\zed_{4})^2$\\
$184$ & $2^3 23^1$ & $3^1 61^1$ & \text{all} & 3 & 61  & Corollary \ref{3p+1} \\
$188$ & $2^2 47^1$ & $11^1 17^1$ & $\zed_{47} \times (\zed_2)^2$ & 11 & 17   & exhaustive computer search\\
$189$ & $3^3 7^1$ & $2^2 47^1$ & all & all & all  & \text{Theorem\   \ref{CGHK-prop}} \\
$196$ & $2^2 7^2$ & $3^1 5^1 13^1 $ & all & 3 & 65  & \text{Theorem\   \ref{CGHK-prop}}\\

$196$ & $2^2 7^2$ & $3^1 5^1 13^1 $ & $(\zed_7)^2 \times (\zed_2)^2$ & 5 & 39  & Theorem\   \ref{quotient}, $p = 7$, $m = 2$\\
$196$ & $2^2 7^2$ & $3^1 5^1 13^1 $ & $(\zed_7)^2 \times (\zed_2)^2$ & 13 & 15  & Theorem\   \ref{quotient}, $p = 7$, $m = 2$\\
$196$ & $2^2 7^2$ & $3^1 5^1 13^1 $ & $(\zed_7)^2 \times \zed_4$ & 5 & 39  & Theorem\   \ref{quotient}, $p = 7$, $m = 2$\\
$196$ & $2^2 7^2$ & $3^1 5^1 13^1 $ & $(\zed_7)^2 \times \zed_4$ & 13 & 15  & Theorem\   \ref{quotient}, $p = 7$, $m = 2$\\
$196$ & $2^2 7^2$ & $3^1 5^1 13^1 $ & $\zed_{49} \times (\zed_2)^2$ & 5 & 39  & exhaustive computer search\\
$196$ & $2^2 7^2$ & $3^1 5^1 13^1 $ & $\zed_{49} \times (\zed_2)^2$ & 13 & 15  & exhaustive computer search\\
\end{longtable}
\renewcommand{\arraystretch}{1}
\end{center}



\begin{lemma}
\label{144.lem}
There does not exist a nontrivial near-factorization in a noncyclic abelian group of order $144$.
\end{lemma}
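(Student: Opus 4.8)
The plan is to enumerate all noncyclic abelian groups of order $144 = 2^4 \cdot 3^2$ and rule out a nontrivial near-factorization in each. Since $n - 1 = 143 = 11 \cdot 13$, the only possibility is an $(11,13)$-near-factorization, and, interchanging $A$ and $B$ if necessary, we may assume $|A| = 11$ and $|B| = 13$. An abelian group of order $144$ is the direct product of an abelian group of order $16$ and an abelian group of order $9$; there are five of the former ($\zed_{16}$, $\zed_8 \times \zed_2$, $\zed_4 \times \zed_4$, $\zed_4 \times \zed_2 \times \zed_2$, $(\zed_2)^4$) and two of the latter ($\zed_9$, $\zed_3 \times \zed_3$), giving $10$ abelian groups, exactly one of which ($\zed_{16} \times \zed_9 \cong \zed_{144}$) is cyclic. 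So nine groups remain.

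For eight of them I would exhibit a homomorphism of $G$ onto an abelian group $H$ of exponent $2$, $3$, $4$ or $6$ with $|H| \ge 13$, so that Theorem \ref{CGHK-cor} forces $11 = |A| \ge |H| - 1 \ge 12$, a contradiction. The Sylow $2$-subgroup of $G$ is always a quotient (the odd part maps to the identity); when it has order $16$ and exponent dividing $4$ this handles $\zed_4 \times \zed_4 \times \zed_9$, $\zed_4 \times \zed_4 \times \zed_3 \times \zed_3$, $\zed_4 \times \zed_2 \times \zed_2 \times \zed_9$, $\zed_4 \times \zed_2 \times \zed_2 \times \zed_3 \times \zed_3$, $(\zed_2)^4 \times \zed_9$, and $(\zed_2)^4 \times \zed_3 \times \zed_3$. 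For $\zed_{16} \times \zed_3 \times \zed_3$, reducing $\zed_{16}$ modulo $8$ yields the quotient $H = \zed_2 \times \zed_3 \times \zed_3$ of exponent $6$ and order $18$; for $\zed_8 \times \zed_2 \times \zed_3 \times \zed_3$, reducing $\zed_8$ modulo $2$ yields $H = (\zed_2)^2 \times \zed_3 \times \zed_3$ of exponent $6$ and order $36$.

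The one remaining group is $\zed_8 \times \zed_2 \times \zed_9$, for which the quotient bound just fails: its largest quotient of exponent in $\{2,3,4,6\}$ is $(\zed_2)^2 \times \zed_3$, of order only $12$, giving merely $|A| \ge 11$. For this group I would invoke Theorem \ref{Pech.thm} instead: rewriting it as $\zed_2 \times \zed_8 \times \zed_9$ exhibits the form $\zed_{2m} \times \zed_{4n} \times G'$ with $m = 1$, $n = 2$, $G' = \zed_9$, so no near-factorization $(A,B)$ can have $|A| \equiv \pm 3 \bmod 8$. Since $11 \equiv 3 \bmod 8$, this case is also impossible, which completes the proof.

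I expect the only subtle point to be recognizing that $\zed_8 \times \zed_2 \times \zed_9$ genuinely escapes Theorem \ref{CGHK-cor} (and also Theorem \ref{quotient}, since its elementary abelian $2$-quotients have rank at most $2$ and the resulting congruences are satisfied by $11$ and $13$), so that Pêcher's theorem is actually required; everything else reduces to a routine inspection of exponents and orders of quotient groups.
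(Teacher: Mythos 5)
Your proof is correct and follows essentially the same route as the paper: enumerate the ten abelian groups of order $144$ via their Sylow subgroups, eliminate most of them with Theorem~\ref{CGHK-cor}, and finish the stragglers with Theorem~\ref{Pech.thm}. The only (immaterial) difference is that you dispatch $\zed_9 \times (\zed_4)^2$ with Theorem~\ref{CGHK-cor} applied to the quotient $H = (\zed_4)^2$ of exponent $4$ and order $16$, so you need P\^{e}cher's theorem only for $\zed_9 \times \zed_2 \times \zed_8$, whereas the paper uses exponent-$6$ quotients for seven groups and invokes Theorem~\ref{Pech.thm} for both $\zed_9 \times \zed_2 \times \zed_8$ and $\zed_9 \times (\zed_4)^2$.
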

\begin{proof}
We have $n = 144 = 9 \times 16$ and $n-1 = 11 \times 13$, so we are seeking an $(11,13)$-near-factorization. The $3$-Sylow subgroup of an abelian group of order $144$ is $\zed_9$ or $(\zed_3)^2$; and the $2$-Sylow subgroup is 
$\zed_{16}$, 
$\zed_2 \times \zed_8$, 
$(\zed_4)^2$, 
$(\zed_2)^2 \times \zed_4$, or 
$(\zed_2)^4$.
There are therefore ten  abelian groups of order $144$, of which one is cyclic.
Seven of the nine noncyclic possibilities are ruled out using Theorem \ref{CGHK-cor}, as indicated in Table \ref{144.tab} (note that the relevant groups $H$ all have exponent equal to six).
The remaining two possibilities are $G = \zed_9 \times \zed_2 \times \zed_8$ and $G = \zed_9 \times \zed_4 \times \zed_4$.
Both of these cases are impossible, as a consequence of Theorem \ref{Pech.thm}.
\end{proof}

\renewcommand{\arraystretch}{1.2}
\begin{table}[htb]
\caption{Nonexistence of near-factorizations in noncyclic abelian groups of order $144$}
\label{144.tab}
\[
\begin{array}{c|c|c|c}
\text{$3$-Sylow subgroup} & \text{$2$-Sylow subgroup} & H  & |H|  \\ \hline
\zed_9 & (\zed_2)^2 \times \zed_4  & \zed_3 \times  (\zed_2)^3   & 24 \\
\zed_9 & (\zed_2)^4 & \zed_3 \times   (\zed_2)^4  & 48\\
(\zed_3)^2 & \zed_{16} & (\zed_3)^2 \times  \zed_{2}  & 18\\
(\zed_3)^2 & \zed_2 \times \zed_8 & (\zed_3)^2 \times  (\zed_{2})^2  & 36\\
(\zed_3)^2 & (\zed_4)^2 & (\zed_3)^2 \times  (\zed_{2})^2  & 36\\
(\zed_3)^2 & (\zed_2)^2 \times \zed_4 & (\zed_3)^2 \times  (\zed_{2})^3  & 72\\
(\zed_3)^2 & (\zed_2)^4 & (\zed_3)^2 \times   (\zed_{2})^4  & 144
\end{array}
\]
\end{table}
\renewcommand{\arraystretch}{1}

\section{Nonexistence of certain strong CEDFs}
\label{SCEDF.sec}

In this section, we  provide an alternate proof of a nonexistence theorem due to Wu, Yang and Feng \cite{WYF} that involves strong circular external difference families. We first need to define these objects.

For two disjoint subsets $A,B$ of a multiplicative group $(G,\cdot)$, define the multiset $\mathcal{D}(B,A)$ as follows:
\[ \mathcal{D}(B,A) = BA^{-1} = \{ yx^{-1}\colon  y \in B, x \in A \}.  \]

\begin{definition}[Strong circular external difference family (SCEDF)]
Let $G$ be a \textup{(}multiplicative\textup{)} group of order $n$ having identity $e$. An \emph{$(n, m, \ell; \lambda)$-strong circular external difference family} in $G$ \textup{(}or $(n, m, \ell; \lambda)$-SCEDF\textup{)} is a set of $m$ disjoint $\ell$-subsets of $G$, say $\mathcal{A} = \{A_0,\dots,A_{m-1}\}$, such that the following multiset equation holds for every $j$, $0 \leq j \leq m-1$:
\[
\mathcal{D}(A_{j+1\bmod m}, A_j) = \lambda (G \setminus \{e\}).
\]
\end{definition}
We observe that $\ell^2 = \lambda (n-1)$ if an $(n, m, \ell; \lambda)$-SCEDF exists.
SCEDFs were introduced in \cite{VS}. SCEDFs and related objects have been studied further in \cite{PS,WYF}.

\begin{theorem}[J. Wu {\it et al.}, {\cite[Theorem 4]{WYF}}]
\label{nonexist.thm}
There does not exist an $(n, m, \ell; 1)$-SCEDF with $m > 2$.
\end{theorem}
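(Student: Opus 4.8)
The plan is to reduce the problem to the uniqueness of mates in near-factorizations, so that the rigidity already established in Theorem~\ref{T3} does all the work. Suppose $\mathcal{A} = \{A_0,\dots,A_{m-1}\}$ is an $(n,m,\ell;1)$-SCEDF with $m > 2$. For each $j$, the defining multiset equation $\mathcal{D}(A_{j+1},A_j) = G\setminus\{e\}$ (indices mod $m$, and $\lambda=1$ so the multiset is actually a set) says exactly that $A_{j+1}A_j^{-1} = G\setminus\{e\}$ with every product occurring once; that is, $(A_{j+1}, A_j^{-1})$ is a near-factorization of $G$. So from consecutive pairs in the cyclic sequence we get near-factorizations $(A_1, A_0^{-1})$, $(A_2, A_1^{-1})$, \dots, $(A_0, A_{m-1}^{-1})$.

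First I would extract, from Theorem~\ref{T3}, the observation that the first coordinate of a near-factorization determines the second. Apply this as follows. From $(A_2, A_1^{-1})$ being a near-factorization and Lemma~1 (the $(r,s)\leftrightarrow(s,r)$ symmetry via $(B^{-1},A^{-1})$), we get that $(A_1^{-1}, A_2^{-1})$ is also a near-factorization; but $(A_1^{-1}, A_0^{-1})$ is too (this is just a rewriting of $(A_1,A_0^{-1})$ via the same symmetry, since $((A_0^{-1})^{-1},A_1^{-1}) = (A_0,A_1^{-1})$ — so I should be careful here and set up the bookkeeping cleanly, perhaps by working with the transpose form $M(A_{j+1})M(A_j^{-1}) = J-I$ throughout). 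The clean statement to aim for: since $(A_1, A_0^{-1})$ and $(A_1, A_2^{-1})$ would both need to be near-factorizations with the same first set $A_1$, Theorem~\ref{T3} forces $A_0^{-1} = A_2^{-1}$, hence $A_0 = A_2$. But the sets $A_0,\dots,A_{m-1}$ are required to be \emph{disjoint} (and nonempty, since $\ell^2 = n-1 \geq 1$), so $A_0 = A_2$ is impossible once $m \geq 3$ guarantees $A_0$ and $A_2$ are genuinely distinct members of $\mathcal{A}$.

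The main obstacle, and the step deserving the most care, is getting the index arithmetic and the inverse/transpose conventions to line up so that two of the near-factorizations really do share their \emph{first} coordinate (Theorem~\ref{T3} is asymmetric: it fixes $A$ and varies $B$). The matrix formulation of Theorem~\ref{equiv.thm}, $M(A)M(B) = J-I$, is the right tool: from $M(A_2)M(A_1^{-1}) = J-I$ one also has, taking transposes and using that $M(H)^T = M(H^{-1})$, the relation $M(A_1)M(A_2^{-1}) = J-I$, i.e.\ $(A_1, A_2^{-1})$ is a near-factorization; combined with $(A_1, A_0^{-1})$ from the $j$ such that $j+1 \equiv 1$, uniqueness gives $A_2^{-1} = A_0^{-1}$. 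One subtlety: when $m = 3$, the indices $0$ and $2$ are distinct mod $3$, so $A_0$ and $A_2$ are distinct listed blocks and disjointness applies; the argument needs $m \geq 3$ precisely so that $A_0 \neq A_2$ as members of the family, which is where the hypothesis $m > 2$ is consumed. I would finish by noting the contradiction with disjointness, and remark that this recovers \cite[Theorem~4]{WYF} with a one-paragraph argument in place of the original proof.
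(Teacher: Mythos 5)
Your proposal is correct and is essentially the paper's own argument: both reduce the statement to the uniqueness of mates (Theorem~\ref{T3}) by producing two near-factorizations that share one coordinate, using the fact that $G\setminus\{e\}$ is inverse-closed. The only cosmetic difference is that you transpose $M(A_2)M(A_1^{-1})=J-I$ to make the two near-factorizations share their \emph{first} coordinate so Theorem~\ref{T3} applies verbatim, whereas the paper inverts the set equation to make them share the second coordinate and invokes the (equivalent) reversed form of uniqueness.
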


\begin{proof}
Suppose $A_1,A_2$ and $A_3$ are three consecutive sets in an SCEDF in a multiplicative group $G$ having identity $e$.
Then $A_1 (A_2)^{-1} = A_2 (A_3)^{-1} = G \setminus \{e\}$. Clearly $(A_2 (A_3)^{-1})^{-1} = A_3 (A_2)^{-1}$. Hence,
$A_1 (A_2)^{-1} = A_3 (A_2)^{-1} = G \setminus \{e\}$, so $(A_1, (A_2)^{-1})$ and $(A_3, (A_2)^{-1})$ are both near-factorizations of $G$.
It follows from Theorem \ref{T3} (which states that mates are unique) that $A_1 = A_3$. However, this is impossible because the sets in a SCEDF are required to be disjoint.
\end{proof}

The proof of Theorem \ref{nonexist.thm} given in \cite{WYF} uses the group ring $\zed[G]$; our proof is just a consequence of the formula (\ref{eq2}). We also comment that the result proven in \cite{WYF} is only stated for abelian groups. 

\section{Near-factorizations with $\lambda > 1$}
\label{lambda.sec}

One possible generalization of a near-factorization would consist an ordered pair of sets $(A,B)$ in a group $G$ satisfying the multiset equation $AB = \lambda(G \setminus \{e\})$, where $\lambda$ is a positive integer. We  refer to this as a near-factorization with \emph{index} $\lambda$. If $|A| = r$ and $|B| = s$, then we will use the notation
$(r,s,\lambda)$-near-factorization. In a manner analogous to Theorem \ref{equiv.thm}, a near-factorization with index $\lambda$ would give rise to a matrix factorization $XY = \lambda(J-I)$.  If $X$ is invertible, then we can solve for $Y$ as a function of $X$, as we did in the case $\lambda = 1$. Thus mates of $A$ are unique for near-factorizations with index $\lambda$ (the reader can fill in the details). One consequence is \ that
the conclusion of Theorem \ref{nonexist.thm} also holds for $(n, m, \ell; \lambda)$-SCEDFs with $\lambda > 1$. 

We observe that there exist near-factorizations with index $\lambda >1$ for certain noncyclic abelian groups.
Suppose $q = p^j$ where $p$ is prime and $q \equiv 1 \bmod 4$. 
It  follows from results of Huczynska and Paterson \cite[Example 5.1]{HP} that there is a near-factorization of the elementary abelian group $(\zed_p)^j$ having index $\lambda = (q-1)/4$. This near-factorization can be constructed as $(A,B)$, where $A$ consists of the quadratic residues in $\eff_q$ and $B$ consists of the quadratic nonresidues; hence, 
we have a $((q-1)/2,(q-1)/2,(q-1)/4)$-near-factorization. (Actually it is shown in \cite{HP} that these sets $A$ and $B$ yield an $(q, 2, (q-1)/2; (q-1)/4)$-SCEDF. Because $-1$ is a quadratic residue in $\eff_q$ if $q \equiv 1 \bmod 4$, it follows  that $(A,B)$ is also a $((q-1)/4)$-near-factorization.) The smallest example of this construction yields a $(4,4,2)$-near-factorization of $(\zed_3)^2$.

We have done some exhaustive searches for $(r,s,2)$-near-factorizations in some small noncyclic abelian groups and it turns out that examples are not hard to find. See Table \ref{T3.tab} for a listing of some $(r,s,2)$-near-factorizations in noncyclic abelian groups of odd order $n \leq 81$. It is interesting to note that all the examples we have found have $r = 4$, and we have not found any 
$(r,s,2)$-near-factorizations in noncyclic abelian groups of even order.

\begin{center}
\renewcommand{\arraystretch}{1.2}
\begin{longtable}{@{}l|r|r|l|l@{}}
\caption{Index $2$ near-factorizations of abelian noncyclic groups of odd order $n \leq 81$}\\ \hline
\label{T3.tab}
group & $r$ & $s$  & $A$ & $B$  \\ \hline
\endfirsthead
\caption[]{(continued)}\\ \hline
group & $r$ & $s$  & $A$ & $B$  \\ \hline
\endhead
$(\zed_3)^2 $ & $ 4 $ & $ 4  $ & $ \{(0,1),(1,0),(0,2),(2,0)\} $ & $ \{(1,1),(1,2),(2,1),(2,2)\}$\\ \hline
$(\zed_5)^2 $ & $ 4 $ & $ 12  $ & $ 
\{(0,1),(1,0),(0,4),(4,0)\}
	 $ & $ 
	 \begin{array}{@{}l@{}}
\{%
(1,1),(1,2),(1,3),(1,4),(2,1),(2,4),\\
(3,1),(3,4),(4,1),(4,2),(4,3),(4,4)
\}\end{array}$\\ \hline
$\zed_3 \times \zed_9 $ & $ 4 $ & $ 13$ & $
 \{(0,1),(1,1),(0,8),(2,8)\} $ & $
	\begin{array}{@{}l@{}}
	\{
(0,0),(0,2),(0,7),(1,3),(1,5),(1,6),\\
(1,7),(1,8),(2,1),(2,2),(2,3),(2,4),\\
(2,6)\}
	\end{array}$\\ \hline
$\zed_5 \times (\zed_{3})^2  $ & $ 4 $ & $ 22$ & $
\{(1,0,1),(1,1,0),(4,0,2),(4,2,0)\}
$ & $
	\begin{array}{@{}l@{}}
\{%
(0,1,1),(0,1,2),(0,2,1),(0,2,2),\\
(1,0,0),(1,0,2),(1,2,0),(1,2,2),\\
(2,0,1),(2,1,0),(2,1,1),(2,1,2),\\
(2,2,1),(3,0,2),(3,1,2),(3,2,0),\\
(3,2,1),(3,2,2),(4,0,0),(4,0,1),\\
(4,1,0),(4,1,1)%
\}
\end{array}$\\ \hline
$(\zed_7)^2  $ & $ 4 $ & $ 24$ & $
 \{(0,1),(1,0),(0,6),(6,0)\} $ & $
	\begin{array}{@{}l@{}}
\{%
(1,1),(1,2),(1,3),(1,4),(1,5),(1,6),\\
(2,1),(2,6),(3,1),(3,3),(3,4),(3,6),\\
(4,1),(4,3),(4,4),(4,6),(5,1),(5,6),\\
(6,1),(6,2),(6,3),(6,4),(6,5),(6,6)\}
\end{array}$\\ \hline
$\zed_7 \times (\zed_{3})^2  $ & $ 4 $ & $ 31$ & $
\{(1,0,1),(1,1,0),(6,0,2),(6,2,0)\}
$ & $
	\begin{array}{@{}l@{}}
\{%
(0,0,0),(0,0,1),(0,0,2),(0,1,0),\\
(0,2,0),(1,1,1),(1,1,2),(1,2,1),\\
(1,2,2),(2,0,0),(2,0,2),(2,2,0),\\
(2,2,2),(3,0,1),(3,1,0),(3,1,1),\\
(3,1,2),(3,2,1),(4,0,2),(4,1,2),\\
(4,2,0),(4,2,1),(4,2,2),(5,0,0),\\
(5,0,1),(5,1,0),(5,1,1),(6,1,1),\\
(6,1,2),(6,2,1),(6,2,2)\}
\end{array}$\\ \hline
$(\zed_5)^2 \times \zed_{3}  $ & $ 4 $ & $ 37$ & $
\{(0,1,1),(1,0,1),(0,4,2),(4,0,2)\}
$ & $
	\begin{array}{@{}l@{}}
\{%
(0,0,0),(0,1,2),(0,2,2),(0,3,1),\\
(0,4,1),(1,0,2),(1,1,0),(1,1,1),\\
(1,2,0),(1,2,2),(1,3,0),(1,3,2),\\
(1,4,1),(1,4,2),(2,0,2),(2,1,0),\\
(2,1,2),(2,2,1),(2,3,0),(2,4,0),\\
(2,4,1),(3,0,1),(3,1,0),(3,1,2),\\
(3,2,0),(3,3,2),(3,4,0),(3,4,1),\\
(4,0,1),(4,1,1),(4,1,2),(4,2,0),\\
(4,2,1),(4,3,0),(4,3,1),(4,4,0),\\
(4,4,2)\}
\end{array}$\\ \hline
$(\zed_9)^2  $ & $ 4 $ & $ 40$ & $
 \{(0,1),(1,0),(0,8),(8,0)\} $ & $
	\begin{array}{@{}l@{}}
\{%
(1,1),(1,2),(1,3),(1,4),(1,5),(1,6),\\
(1,7),(1,8),(2,1),(2,8),(3,1),(3,3),\\
(3,4),(3,5),(3,6),(3,8),(4,1),(4,3),\\
(4,6),(4,8),(5,1),(5,3),(5,6),(5,8),\\
(6,1),(6,3),(6,4),(6,5),(6,6),(6,8),\\
(7,1),(7,8),(8,1),(8,2),(8,3),(8,4),\\
(8,5),(8,6),(8,7),(8,8)\}
\end{array}$\\ \hline
$\zed_3 \times \zed_{27}  $ & $ 4 $ & $ 40$ & $
 \{(0,1),(1,1),(0,26),(2,26)\} $ & $
	\begin{array}{@{}l@{}}
\{%
(0,4),(0,6),(0,7),(0,8),(0,9),(0,11),\\
(0,16),(0,18),(0,19),(0,20),(0,21),\\
(0,23),(1,0),(1,2),(1,3),(1,4),(1,5),\\
(1,7),(1,12),(1,14),(1,15),(1,16),\\
(1,17),(1,19), (1,24),(1,26),(2,0),\\
(2,1),(2,3),(2,8),(2,10),(2,11),\\
(2,12),(2,13),(2,15),(2,20),(2,22),\\
(2,23),(2,24),(2,25)\}
\end{array}$\\ \hline
\end{longtable}
\renewcommand{\arraystretch}{1}
\end{center}

\section{Summary and Discusion}
\label{summary.sec}

We have shown that, if a set $A$ has a mate (in a near-factorization), then the mate is unique. Moreover, there is a simple direct formula to compute this mate.  This yields more efficient exhaustive searches for near-factorizations, and enables larger groups to be searched. We have examined all  noncyclic abelian groups of order less than 200. In every case, existence of a nontrivial near-factorization is ruled out by one of several theoretical criteria or by an exhaustive computer search. Hence there is strong empirical evidence that nontrivial near-factorizations do not exist in noncyclic abelian groups. On the other hand, nontrivial near factorizations of index $\lambda > 1$ certainly exist in various noncyclic abelian groups. The general question of when noncyclic abelian groups admit nontrivial near-factorizations of a specified index $\lambda \geq 1$ is a very interesting open problem.

\end{document}